\newcommand{\blowup}[1]{\tikz \node[draw,circle, inner sep=0pt, minimum size=.5mm]{#1};}
  \newcommand{\miniscule}{\@setfontsize\miniscule{3}{7}}
    \newcommand{\stiny}{\@setfontsize\miniscule{5}{7}}
  \newcommand{\miniscule}{\@setfontsize\miniscule{3}{7}}
   \newcommand{\stiny}{\@setfontsize\miniscule{5}{7}}
  \newcommand{\miniscule}{\@setfontsize\miniscule{3}{7}}
    \newcommand{\stiny}{\@setfontsize\miniscule{5}{7}}
\sloppy\pagestyle{plain}
\newtheorem{theorem}[equation]{Theorem}
\newtheorem{lemma}[equation]{Lemma}
\newtheorem{corollary}[equation]{Corollary}
\newtheorem*{question*}{Question}
\newtheorem*{problem*}{Problem}
\theoremstyle{definition}
\newtheorem{example}[equation]{Example}
\newtheorem{definition}[equation]{Definition}
\newtheorem{remark}[equation]{Remark}
\theoremstyle{remark}
\makeatletter\@addtoreset{equation}{section} \makeatother
\newcommand{\one}{\mbox{\tiny \ding{172}}}
\newcommand{\two}{\mbox{\tiny \ding{173}}}
\newcommand{\three}{\mbox{\tiny \ding{174}}}
\newcommand{\four}{\mbox{\tiny \ding{175}}}
\newcommand{\five}{\mbox{\tiny \ding{176}}}
\newcommand{\six}{\mbox{\tiny \ding{177}}}
\newcommand{\seven}{\mbox{\tiny \ding{178}}}
\newcommand{\eight}{\mbox{\tiny \ding{179}}}
\newcommand{\nine}{\mbox{\tiny \ding{180}}}
\newcommand{\ten}{\mbox{\tiny \ding{181}}}
\newcommand{\eleven}{\mbox{\miniscule \blowup{\textbf{11}}}}
\newcommand{\twelve }{\mbox{\miniscule \blowup{\textbf{12}}}}
\newcommand{\thirteen}{\mbox{\miniscule \blowup{\textbf{13}}}}
\newcommand{\bone}{\mbox{\tiny \ding{182}}}
\newcommand{\bthree}{\mbox{\tiny \ding{184}}}
\newcommand{\bfour}{\mbox{\tiny \ding{185}}}
\newcommand{\bfive}{\mbox{\tiny \ding{186}}}
\newcommand{\bsix}{\mbox{\tiny \ding{187}}}
\title{Cylinders in singular del Pezzo surfaces}
\author{Ivan Cheltsov, Jihun Park and Joonyeong Won}
\address{ \emph{Ivan Cheltsov}\newline \textnormal{School of Mathematics, The
University of Edinburgh
\newline \medskip James Clerk Maxwell Building,
The King's Buildings,
Mayfield Road, Edinburgh EH9 3JZ, UK.\newline
Laboratory of Algebraic Geometry, National Research University Higher School of Economics
\newline 7 Vavilova Str. Moscow, 117312, Russia.\newline
 \texttt{I.Cheltsov@ed.ac.uk}}}
\address{ \emph{Jihun Park}\newline \textnormal{Center for Geometry and Physics, Institute for Basic Science (IBS)
\newline \medskip 77 Cheongam-ro, Nam-gu, Pohang, Gyeongbuk, 37673, Korea. \newline
Department of
Mathematics, POSTECH \newline
 77 Cheongam-ro, Nam-gu, Pohang, Gyeongbuk,  37673, Korea. \newline
\texttt{wlog@postech.ac.kr}}}
\address{\emph{Joonyeong
Won}\newline \textnormal{KIAS\newline 85 Hoegiro Dongdaemun-gu, Seoul
130-722, Korea.\newline \texttt{leonwon@kias.re.kr}}}
\begin{document}

\begin{abstract}
For each del Pezzo surface $S$ with du Val singularities, we determine whether it admits a $(-K_S)$-polar cylinder or not. If it allows one, then we present an effective $\mathbb{Q}$-divisor $D$ that is $\mathbb{Q}$-linearly  equivalent to $-K_S$ and such that the open set $S\setminus\mathrm{Supp}(D)$ is a cylinder. As a corollary, we classify all the del Pezzo surfaces with du Val singularities that admit nontrivial  $\mathbb{G}_a$-actions on their affine cones defined by their anticanonical divisors.
\end{abstract}

\maketitle

All considered varieties are assumed to be
algebraic and defined over an algebraically closed field of
characteristic $0$ throughout this article.

\section{Introduction}
\label{sec:intro}

Let $X$ be a projective variety and $H$ be an ample
divisor on $X$. The \emph{generalised cone} over the polarised variety
$(X,H)$ is the  affine variety defined by
$$
\hat{X}=\mathrm{Spec}\left(\bigoplus_{n\geqslant 0}H^0\left(X, \mathcal{O}_{X}\left(nH\right)\right)\right).%
$$
The affine variety $\hat{X}$ is the usual cone
over the embedded image of $X$ in a projective space by the linear system $|H|$
if $H$ is very ample and the image of the variety  $X$ is projectively normal.

The question of whether the generalised cone  of a given polarised variety $(X, H)$ admits a nontrivial $\mathbb{G}_a$-action has been studied extensively in  \cite{CheltsovParkWon}, \cite{KPZ11a}, \cite{KPZ11b},
\cite{KPZ12a} and
\cite{KPZ12b}. 
The present article is focused on singular del Pezzo surfaces $S_d$ polarised by anticanonical divisors $-K_{S_d}$ to extend the results in  \cite{CheltsovParkWon},  \cite{KPZ11a}
and
\cite{KPZ12b} to the singular del Pezzo surfaces.  Indeed, it classifies all the del Pezzo surfaces with du Val singularities that admit nontrivial  $\mathbb{G}_a$-actions on their generalised cones
over $(S_d, -K_{S_d})$.

Let $S_d$ be a del Pezzo surface of degree $d$ with at worst du Val singularities
and let $\hat{S}_d$ be the generalised cone  over $(S_d, -K_{S_d})$.
For $3\leqslant d\leqslant 9$, the anticanonical divisor  is very ample and the anticanonical linear system embeds $S_d$ into the projective space $\mathbb{P}^d$. The embedded surface $S_d\subset \mathbb{P}^d$ is projectively normal.
Therefore the generalised cone $\hat{S}_d$ is the affine cone in $\mathbb{A}^{d+1}$ over  the  variety embedded in $\mathbb{P}^d$. In particular, for $d=3$, the  surface $S_3$  anticanonically embedded in $\mathbb{P}^3$ is defined by a cubic homogenous polynomial equation, and hence the generalised cone $\hat{S}_3$ is the affine hypersurface in $\mathbb{A}^4$ defined by the same cubic polynomial equation.
Meanwhile, for $d=2$ (resp. $d=1$), the generalised cone $\hat{S}_d$ is the affine cone in $\mathbb{A}^{4}$ over  the hypersurface  in the weighted projective space
$\mathbb{P}(1,1,1,2)$  (resp. $\mathbb{P}(1,1,2,3)$) defined by a quasi-homogeneous polynomial of degree $4$ (resp. $6$) (\cite[Theorem~4.4]{HiWa81}).

The group of T.~Kishimoto, Yu.~Prokhorov, M.~Zaidenberg and the group of I.~Cheltsov, J.~Park, J.~Won  have studied  existence of
nontrivial $\mathbb{G}_a$-actions on such affine cones and obtained results for smooth del Pezzo surfaces.
\begin{theorem}\label{theorem:KPZ-actions-4-9}
For a smooth del Pezzo surface $S_d$ of degree $4\leqslant d\leqslant 9$,  its generalised cone $\hat{S}_d$ admits an effective $\mathbb{G}_a$-action.
\end{theorem}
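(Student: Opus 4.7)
My plan is to apply the correspondence due to Kishimoto, Prokhorov and Zaidenberg between $\mathbb{G}_a$-actions on affine cones and cylinders in the base: for a normal polarised projective variety $(X, H)$, the generalised cone $\hat{X}$ admits a nontrivial $\mathbb{G}_a$-action if and only if $X$ contains an $H$-polar cylinder, meaning a Zariski open subset $U = X \setminus \mathrm{Supp}(D)$ where $D$ is an effective $\mathbb{Q}$-divisor with $D \sim_{\mathbb{Q}} H$ and $U \cong Z \times \mathbb{A}^1$ for some variety $Z$. Granting this, the theorem reduces to constructing, on each smooth del Pezzo surface $S_d$ with $4 \leqslant d \leqslant 9$, a $(-K_{S_d})$-polar cylinder.

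For $d = 7, 8, 9$ the constructions are elementary. When $d = 9$, the complement of any line $L \subset \mathbb{P}^2$ is $\mathbb{A}^2$ and $3L \sim -K_{\mathbb{P}^2}$. When $d = 8$: on $\mathbb{P}^1 \times \mathbb{P}^1$ remove one fiber from each ruling, and on $\mathbb{F}_1$ remove the $(-1)$-section $s$ together with a fiber $f$; both give $\mathbb{A}^2$-complements whose removed curves support anticanonical divisors ($2L_1 + 2L_2$ and $2s + 3f$, respectively). For $d = 7$, realize $S_7 = \mathrm{Bl}_{p_1, p_2} \mathbb{P}^2$ and take the pencil of lines through $p_1$; this gives a $\mathbb{P}^1$-fibration $\pi : S_7 \to \mathbb{P}^1$ with unique degenerate fibre $\tilde{\ell} + E_2$, and $S_7 \setminus (E_1 \cup \tilde{\ell} \cup E_2)$ is an $\mathbb{A}^1$-bundle over $\mathbb{A}^1$, hence $\mathbb{A}^2$, with $2 E_1 + 3 \tilde{\ell} + 2 E_2 \sim -K_{S_7}$. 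The case $d = 6$ is analogous using $\mathrm{Bl}_{p_1, p_2, p_3} \mathbb{P}^2$, with the second degenerate fibre handled by removing only one of its two components so that the base of the resulting $\mathbb{A}^1$-fibration is $\mathbb{A}^1$ or $\mathbb{G}_m$; in either case the open complement is a cylinder.

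The delicate cases are $d = 4, 5$, where $S_d$ admits no convenient blow-down contracting a single $(-1)$-curve to a cylinder on $\mathbb{P}^2$. Here one exploits a conic bundle structure $\pi : S_d \to \mathbb{P}^1$ whose reducible fibres are unions of two transversally crossing $(-1)$-curves: $S_5$ has five such structures with three reducible fibres each, and $S_4$ has five such structures with four reducible fibres each. Removing one reducible fibre together with a section of $\pi$, and forming suitable positive rational combinations of the removed curves, produces a $(-K_{S_d})$-polar open subset that fibers over $\mathbb{A}^1$ with $\mathbb{A}^1$-fibres; the chosen section trivializes this $\mathbb{A}^1$-bundle. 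The main obstacle is the $d = 4$ case: $S_4 \subset \mathbb{P}^4$ has sixteen $(-1)$-curves with intricate intersection combinatorics, and the argument requires identifying within this graph a section of the conic bundle that actually extends to a global trivialization after the degenerate fibre is removed, while keeping the supporting configuration numerically anticanonical. Verifying that the resulting $\mathbb{A}^1$-fibration is a true product rather than a nontrivial $\mathbb{A}^1$-bundle is the genuinely new ingredient relative to the higher-degree cases.
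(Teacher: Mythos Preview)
The paper itself does not prove this statement; it simply refers to \cite[Theorem~3.19]{KPZ11a}. Your proposal instead attempts an explicit construction of $(-K_{S_d})$-polar cylinders, which is the right strategy, and your treatment of the degrees $d\geqslant 6$ is essentially correct.

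For $d=4$ and $d=5$, however, there is a genuine gap. You propose to remove \emph{one} reducible fibre together with a section of a conic bundle $\pi\colon S_d\to\mathbb{P}^1$. This fails on two counts. First, numerically: the two fibre components together with the section span only a rank-three subspace of $\mathrm{Pic}(S_d)\otimes\mathbb{Q}$, and one checks directly that $-K_{S_d}$ does not lie in it (for instance, on $S_5$ with section $E_1$ and fibre $L_{12}+E_2$ these classes generate $\langle H,E_1,E_2\rangle$, which misses $E_3$ and $E_4$). So no positive $\mathbb{Q}$-combination of these three curves is anticanonical. Second, geometrically: the conic bundle has $8-d\geqslant 3$ reducible fibres, so after removing only one of them the remaining reducible fibres still lie over points of $\mathbb{A}^1$, and the induced map does not have $\mathbb{A}^1$-fibres there. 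The triviality discussion you flag as the ``genuinely new ingredient'' never gets off the ground.

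The cases $d=4,5$ are in fact no harder than $d=6$. A uniform construction: write $S_d=\mathrm{Bl}_{p_1,\dots,p_{9-d}}\mathbb{P}^2$, choose a smooth conic $C$ through all the $p_i$ (possible since $9-d\leqslant 5$; for $d=4$ the conic is unique) and a line $L$ tangent to $C$ at a point distinct from the $p_i$. Then for any rational $b$ with $1<b<\tfrac{3}{2}$ the $\mathbb{Q}$-divisor
\[
D=(3-2b)\,\tilde{L}+b\,\tilde{C}+(b-1)\sum_{i} E_i
\]
is effective, $\mathbb{Q}$-linearly equivalent to $-K_{S_d}$, and its support contains every exceptional curve; hence
\[
S_d\setminus\mathrm{Supp}(D)\ \cong\ \mathbb{P}^2\setminus(L\cup C)\ \cong\ \mathbb{A}^1\times\bigl(\mathbb{A}^1\setminus\{\text{one point}\}\bigr).
\]
No special analysis of the sixteen lines on $S_4$, or of sections versus bundles, is required.
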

\begin{proof}
See \cite[Theorem~3.19]{KPZ11a}.
\end{proof}
\begin{theorem}\label{theorem:KPZ-actions-1-2-3}
For a smooth del Pezzo surface $S_d$ of degree $d\leqslant 3$,  its generalised cone $\hat{S}_d$ admits  no nontrivial $\mathbb{G}_a$-action.
\end{theorem}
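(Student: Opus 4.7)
The plan is to convert the $\mathbb{G}_a$-action question into the nonexistence of a $(-K_{S_d})$-polar cylinder on $S_d$ and then exploit the rigidity of $-K_{S_d}$ in low degree. First I would invoke the cylinder correspondence of \cite{KPZ11a}: a nontrivial $\mathbb{G}_a$-action on $\hat{S}_d$ exists if and only if there is an effective $\mathbb{Q}$-divisor $D\qsim -K_{S_d}$ such that $U:=S_d\setminus\mathrm{Supp}(D)\cong Z\times\mathbb{A}^1$ for some affine curve $Z$. It therefore suffices to disprove the existence of such a pair $(U,D)$ when $d\leqslant 3$.

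Assume for contradiction that $(U,D)$ exists. The $\mathbb{A}^1$-fibration $U\to Z$ extends to a pencil of rational curves on $S_d$; after a minimal resolution of base points $\pi:\tilde{S}\to S_d$, a general fibre $\tilde{L}$ is a smooth rational curve with $\tilde{L}^{2}=0$, so by adjunction $-K_{\tilde{S}}\cdot\tilde{L}=2$. Pushing forward and using $K_{\tilde{S}}=\pi^{*}K_{S_d}+E$ for an effective exceptional divisor $E$, the image $L_t=\pi_{*}\tilde{L}$ satisfies $-K_{S_d}\cdot L_t\geqslant 2$. Since $L_t\cap U\cong\mathbb{A}^1$ is a $\mathbb{P}^1$ with one point removed, the closure $L_t$ meets $\mathrm{Supp}(D)$ set-theoretically at exactly one point $p_0$. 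As $D\qsim -K_{S_d}$, it follows that the local intersection number $i_{p_0}(D,L_t)=D\cdot L_t\geqslant 2$.

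I would then derive a contradiction from the global log canonical threshold. For smooth del Pezzo surfaces with $d\leqslant 3$, one has $\alpha(S_d)\geqslant 2/3$, so the pair $(S_d,\tfrac{2}{3}D)$ is log canonical, which in turn forces both $\mult_{p_0}D\leqslant 3/2$ and sharp restrictions on the tangential order at which $L_t$ can meet $D$ at $p_0$. Showing that these restrictions are incompatible with $i_{p_0}(D,L_t)\geqslant 2$ is the heart of the argument.

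The main obstacle is precisely this last step: because $L_t$ may be tangent to branches of $D$, mere multiplicity bounds do not immediately rule out $i_{p_0}(D,L_t)\geqslant 2$, and one needs to exploit the detailed low-degree geometry of the del Pezzo surface---the $27$ lines for $d=3$, the double cover to $\mathbb{P}^2$ branched along a smooth quartic for $d=2$, and the base point of $|-K_{S_1}|$ together with the elliptic fibration $|-2K_{S_1}|$ for $d=1$---to classify the possible pencils and boundary divisors and rule each one out in turn. This case analysis is the content of \cite{CheltsovParkWon} for $d=3$ and of the companion KPZ papers for $d\in\{1,2\}$.
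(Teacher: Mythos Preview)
Your reduction to the nonexistence of a $(-K_{S_d})$-polar cylinder via the KPZ criterion is exactly what the paper does, and your final attribution (CheltsovParkWon for $d=3$, KPZ for $d\leqslant 2$) matches the paper's citation. So at the level of ``what must be proved and where it is proved'' your proposal is correct.

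Where your sketch diverges from the actual arguments is the intermediate mechanism. You propose to use the global log canonical threshold bound $\alpha(S_d)\geqslant 2/3$ to get $\mult_{p_0}D\leqslant 3/2$ and then to combine this with $i_{p_0}(D,L_t)\geqslant 2$. As you yourself note, the multiplicity bound alone does not contradict the intersection bound, and in fact $\alpha(S_d)<1$ for $d=2,3$ in general, so log canonicity of $(S_d,D)$ is \emph{not} available. The references (and the paper's own Section~\ref{sec:cylinders} for the singular generalisation) go in the opposite direction: they first show the pencil has a unique base point $P$, then resolve base points and compute that the discrepancy along the section $E_n$ is exactly $-2$ (Lemma~\ref{lemma:pseudo-convexity}), so $(S_d,D')$ is \emph{not} log canonical at $P$ for every effective $D'\sim_{\mathbb Q}-K_{S_d}$ with $\mathrm{Supp}(D')\subseteq\mathrm{Supp}(D)$. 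This persistence under the convexity replacement of Lemma~\ref{lemma:convexity} is the crucial gain: it lets one assume $\mathrm{Supp}(D)$ does not contain a carefully chosen anticanonical curve through $P$ (the tangent hyperplane section $T_P$ for $d=2,3$, the unique member of $|-K_S|$ through $P$ for $d=1$), after which a direct local intersection computation yields the contradiction. Your $\alpha$-invariant route never reaches this replacement step, which is why it stalls at exactly the point you identify as the obstacle.
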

\begin{proof}
See \cite[Theorem~1.1]{KPZ12b}
and \cite[Corollary~1.8]{CheltsovParkWon}.
\end{proof}

Their proofs make good use of a geometric property called cylindricity, which is worthwhile to study for its own sake.
\begin{definition}
\label{definition:polar-cylinder} Let $M$  be a $\mathbb{Q}$-divisor on a  projective normal variety  $X$.  An $M$-polar cylinder in $X$ is
an open subset
$$
U=X\setminus \mathrm{Supp}(D)
$$
defined by an effective
$\mathbb{Q}$-divisor $D$  in the $\mathbb{Q}$-linear equivalence class of $M$   such that $U$ is isomorphic to $Z\times\mathbb{A}^1$ for some affine variety $Z$.
\end{definition}
It is shown that   the existence of a nontrivial $\mathbb{G}_a$-action on the generalised cone over $(X, H)$ is equivalent to the existence of an $H$-polar cylinder on $X$.

\begin{lemma}\label{theorem:KPZ-criterion} Let $X$ be a projective normal variety equipped with  an ample Cartier
divisor  $H$ on $X$. Suppose that the generalised  cone $\hat{X}$ over $(X,H)$ is normal. Then $\hat{X}$
admits an effective $\mathbb{G}_{a}$-action if and only if $X$
contains an $H$-polar cylinder.
\end{lemma}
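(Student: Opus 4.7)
The natural framework is the dictionary between $\mathbb{G}_a$-actions on an affine variety and locally nilpotent derivations (LNDs) on its coordinate ring. Let $R=\bigoplus_{n\geqslant 0}H^0(X,\mathcal{O}_X(nH))$, so that $\hat{X}=\mathrm{Spec}(R)$, and let the $\mathbb{G}_m$-action coming from the grading be denoted by $\lambda$. I would prove the two implications separately.

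\textbf{Cylinder $\Rightarrow$ $\mathbb{G}_a$-action.} Suppose $U=X\setminus\mathrm{Supp}(D)\cong Z\times\mathbb{A}^1$ with $D\qsim H$ effective. Clearing denominators, I may choose $m\geqslant 1$ and a section $f\in H^0(X,mH)$ with zero locus exactly $\mathrm{Supp}(D)$. Then $f$ trivialises $\mathcal{O}_X(mH)$ over $U$, and the preimage of $U$ in $\hat{X}\setminus\{0\}$ is the principal open $\mathrm{Spec}(R_f)$, which is a (possibly étale) quotient of $U\times\mathbb{G}_m$ by $\mu_m$ compatible with $\lambda$. The translation action on the $\mathbb{A}^1$-factor of $U$ pulls back to a $\mathbb{G}_a$-action commuting with $\lambda$, and hence defines a homogeneous derivation $\partial$ on $R_f$. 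The heart of this direction is to show that $\partial$ extends to a (locally nilpotent) derivation of $R$: extension across the codimension-one locus cut out by $f$ uses normality of $\hat{X}$ and a Hartogs argument, while local nilpotency on each graded piece $R_n$ follows because $\partial$ shifts degrees by a fixed integer and any orbit of $\mathbb{G}_a$ inside a fixed-degree piece meets only finitely many degrees before stabilising. Integrating $\partial$ gives the desired $\mathbb{G}_a$-action on $\hat{X}$.

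\textbf{$\mathbb{G}_a$-action $\Rightarrow$ cylinder.} A nontrivial $\mathbb{G}_a$-action gives a nonzero LND $\partial$ on $R$. Averaging against $\lambda$ and decomposing $\partial=\sum\partial_i$ into graded components, one of the components is again a nonzero LND, so I may assume $\partial$ is homogeneous of some degree $d$. The standard slice theorem for LNDs produces a homogeneous $s\in R$ and $f:=\partial(s)\in\ker\partial$, both homogeneous, such that
\[
R[1/f]=(\ker\partial)[1/f][s/f]
\]
with $s/f$ transcendental over $(\ker\partial)[1/f]$. Since $f$ is homogeneous, the principal open $D(f)\subseteq\hat{X}$ is $\lambda$-stable, and taking Proj (or equivalently, quotienting by $\lambda$) gives an open subset $U:=\{f\neq 0\}\subseteq X$ on which the product structure $\mathrm{Spec}((\ker\partial)[1/f])\times\mathbb{A}^1$ descends to an isomorphism $U\cong Z\times\mathbb{A}^1$: here $Z$ is the Proj of the graded ring $(\ker\partial)[1/f]_{\deg\text{-}0\text{ w.r.t.\ a shift}}$, and the $\mathbb{A}^1$-factor arises because the slice element $s/f$ has degree $-d$, so a suitable power becomes a well-defined affine coordinate on $U$. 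The effective divisor $D=\frac{1}{\deg f}\mathrm{div}(f)$ is $\mathbb{Q}$-linearly equivalent to $H$, and $U=X\setminus\mathrm{Supp}(D)$ is the required $H$-polar cylinder.

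\textbf{Expected obstacle.} The forward direction is essentially a formal consequence of the LND slice theorem once the grading is handled. The real technical point is the backward direction: passing from the cylinder product structure on the affine open $U\subseteq X$ up to an \emph{algebraic} locally nilpotent derivation on all of $R$, rather than merely on $R_f$. The extension relies on normality of $\hat{X}$ (as hypothesised) and on the divisor $D$ being $\mathbb{Q}$-linearly equivalent to $H$, not just effective; the linear equivalence is precisely what keeps the pole orders bounded so that the a priori rational derivation becomes regular on $R$. This is the step I would verify most carefully.
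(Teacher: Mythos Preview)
The paper does not give a proof of this lemma at all; it simply cites \cite[Corollary~3.2]{KPZ12a}. Your sketch is in the same spirit as that reference---the dictionary with locally nilpotent derivations, homogenising via the $\mathbb{G}_m$-action, and the slice theorem for the direction ``action $\Rightarrow$ cylinder''---so on the level of strategy you are aligned with the source the paper relies on.

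There is, however, a genuine gap in the direction ``cylinder $\Rightarrow$ $\mathbb{G}_a$-action''. You propose to extend the derivation $\partial$ from $R_f$ to $R$ by ``normality of $\hat{X}$ and a Hartogs argument'' across the locus $\{f=0\}$. But that locus has codimension one in $\hat{X}$, and Hartogs-type extension only works across loci of codimension $\geqslant 2$; normality alone cannot push a derivation (or a regular function) across a divisor. The correct mechanism is algebraic rather than analytic: one arranges that $f\in\ker\partial$ (an invertible function on $Z\times\mathbb{A}^1$ is pulled back from $Z$, hence $\mathbb{G}_a$-invariant), and then observes that for a finite generating set $r_1,\dots,r_s$ of $R$ there exist $n_i$ with $f^{n_i}\partial(r_i)\in R$. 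Taking $N=\max n_i$, the derivation $f^{N}\partial$ sends $R$ into $R$; and since $f\in\ker\partial$ one has $(f^{N}\partial)^k=f^{Nk}\partial^k$, so local nilpotency is preserved. Normality is used elsewhere (e.g.\ to identify $R$ with the full section ring and to handle the vertex of the cone, which \emph{is} of codimension $\geqslant 2$), but not to cross $\{f=0\}$. Your remark that the $\mathbb{Q}$-linear equivalence $D\sim_{\mathbb{Q}}H$ ``keeps the pole orders bounded'' is pointing at the right phenomenon, but the bound is realised by clearing denominators with $f^N$, not by Hartogs.
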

\begin{proof}
See \cite[Theorem~2.1]{KPZ12b}.
\end{proof}
Indeed, in order to prove Theorems ~\ref{theorem:KPZ-actions-4-9} and \ref{theorem:KPZ-actions-1-2-3}, they show that  a smooth del Pezzo surface $S_d$ has
a $(-K_{S_d})$-polar cylinder if $4\leqslant d\leqslant 9$ but  no  $(-K_{S_d})$-polar cylinder if $d\leqslant 3$.

The goal of the present article is to extend the results of  \cite{CheltsovParkWon}, \cite{KPZ11a}  and \cite{KPZ12b} to del Pezzo surfaces with du Val singularities. To be precise, we prove the following:

\begin{theorem}[cf. Remark~\ref{remark:tiger}]
\label{theorem:main} Let $S_d$ be a del Pezzo surface of degree $d$ with at most
du Val singularities.
\begin{itemize}
\item[I.] The surface $S_d$ does not admit a $(-K_{S_d})$-polar cylinder when
\begin{enumerate}
\item $d=1$ and  $S_d$ allows only singular points of types $\mathrm{A}_1$, $\mathrm{A}_2$, $\mathrm{A}_3$,
$\mathrm{D}_4$ if any;
\item $d=2$ and $S_d$ allows only singular points of type $\mathrm{A}_1$ if any;
\item $d=3$ and $S_d$ allows no singular point.%
\end{enumerate}
\item[II.] The surface $S_d$ has a $(-K_{S_d})$-polar cylinder if it is not one of the del Pezzo surfaces listed in I.
\end{itemize}
\end{theorem}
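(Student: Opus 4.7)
By Lemma~\ref{theorem:KPZ-criterion}, classifying du Val del Pezzo surfaces whose generalised cone admits a nontrivial $\mathbb{G}_a$-action is the same as classifying the $S_d$ that carry a $(-K_{S_d})$-polar cylinder, so I would work with cylinders throughout. The two halves of the theorem have opposite flavours: Part~II is a construction problem (exhibit a divisor $D$), while Part~I is an obstruction problem (rule one out). I would organise the argument by degree $d$ and, within each degree, by Dynkin type of the singularities, using the minimal resolution $\pi:\tilde S_d\to S_d$, on which the du Val singularities become configurations of $(-2)$-curves and $\tilde S_d$ is a weak del Pezzo surface.

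\textbf{Existence (Part II).} In each non-excluded case I would pick a specific point (often a smooth point of $S_d$ chosen relative to a distinguished $(-1)$- or exceptional curve of $\pi$) together with a suitable pencil through it---typically the pencil of anticanonical tangent cubics for $d=3$, tangent conics for $d=2$, and tangent sections for $d=1$. The distinguished member $T$ of that pencil together with its residual components provides an effective $D\sim_{\mathbb{Q}}-K_{S_d}$ such that $S_d\setminus\mathrm{Supp}(D)$ is fibred by affine lines over a rational base. For $d\ge 4$ the constructions behind Theorem~\ref{theorem:KPZ-actions-4-9} from \cite{KPZ11a} can be taken over almost verbatim, being careful to keep the $(-2)$-curves inside $\mathrm{Supp}(D)$. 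For $d\le 3$ in the non-excluded singular cases, the presence of a sufficiently large Dynkin configuration forces $\tilde S_d$ to contain an additional $(-1)$-curve meeting the exceptional set in a rigid way, and I would use such a $(-1)$-curve as the vertical part of the cylinder fibration, blowing it down together with the adjacent $(-2)$-chain.

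\textbf{Non-existence (Part I).} Suppose $S_d$ is one of the listed surfaces and, for contradiction, that $U=S_d\setminus\mathrm{Supp}(D)$ is a $(-K_{S_d})$-polar cylinder. The projection $U\cong Z\times\mathbb{A}^1\to Z$ extends to a rational map $S_d\dashrightarrow\overline Z$ whose resolution on $\tilde S_d$ produces a pencil $\mathcal{L}$ of rational curves. I would pull $D$ back to $\tilde S_d$, split it into components of $\mathcal{L}$ and horizontal components, and use $\pi^{\ast}(-K_{S_d})=-K_{\tilde S_d}$ (du Val singularities are crepant) to bound intersection numbers against $\mathcal{L}$. The core mechanism, following \cite{CheltsovParkWon, KPZ12b}, is the $\alpha$-invariant estimate: each surface in the Part~I list satisfies $\alpha(S_d)\ge 1$, so every effective $D\sim_{\mathbb{Q}}-K_{S_d}$ is log canonical; combined with the cylinder structure this pins down the multiplicities along the components of $\mathcal{L}$, and the resulting numerical system is seen to be inconsistent case by case from the Dynkin data.

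\textbf{Main obstacle.} I expect the hardest step to be precisely the non-existence proof for the borderline degree-$1$ surfaces with singularities only of types $\mathrm{A}_1,\mathrm{A}_2,\mathrm{A}_3,\mathrm{D}_4$: the gap between singular types that admit a cylinder (starting at $\mathrm{A}_4$ and $\mathrm{D}_5$) and those that do not is narrow, so the $\alpha$-invariant argument must be tight and one has to check that no $(-2)$-chain on $\tilde S_d$ absorbs enough discrepancy to open room for a pencil of rational curves that was unavailable in the smooth case of \cite{CheltsovParkWon}. On the existence side the principal difficulty is uniformity: producing a short list of constructions, indexed by Dynkin types, that covers all remaining du Val del Pezzo surfaces without degenerating when two singular points come together or when the chosen $(-1)$-curve specialises onto the exceptional divisor.
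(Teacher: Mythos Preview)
Your outline for Part~II is in the right spirit---the paper also proceeds by explicit construction indexed by singularity type, though it organises everything as sequences of blow-ups from $(\mathbb{P}^2,D_{\mathbb{P}^2})$ for four fixed model divisors $D_{\mathbb{P}^2}$, followed by a contraction to the weak del Pezzo surface, rather than by choosing pencils directly on $S_d$.

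The genuine gap is in Part~I. The claim that every surface on the list has $\alpha(S_d)\ge 1$ is false, and not only at the margin. A smooth cubic with an Eckardt point already has $\alpha=2/3$. For a degree-$2$ del Pezzo whose branch quartic $R$ contains a line $L$, the preimage $C=\pi^{-1}(L)$ satisfies $2C\sim -K_S$, so $\alpha(S)\le 1/2$; such surfaces can perfectly well have only $\mathrm{A}_1$ singularities (take $R$ to be a line plus a smooth cubic meeting it transversally). So the assertion ``every effective $D\sim_{\mathbb Q}-K_{S_d}$ is log canonical'' is simply unavailable, and with it your route to a contradiction. The papers \cite{CheltsovParkWon,KPZ12b} that you cite do not use a bare $\alpha$-invariant bound either.

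What the paper does instead is replace the global $\alpha$-bound by a \emph{conditional} log-canonicity statement together with a stronger consequence of the cylinder hypothesis. The key point (Lemma~\ref{lemma:pseudo-convexity}) is that if $S\setminus\mathrm{Supp}(D)$ is a cylinder with base point $P$, then $(S,D')$ is non-log-canonical at $P$ for \emph{every} effective $D'\sim_{\mathbb Q}D$ with $\mathrm{Supp}(D')\subseteq\mathrm{Supp}(D)$: the last exceptional divisor in the resolution of the pencil is a section, and intersecting $K_W+\hat D'$ with a general fibre forces its coefficient to be $-2$ regardless of the coefficients in $D'$. This is what makes Lemma~\ref{lemma:convexity} usable: one can strip the distinguished anticanonical curve $C$ through $P$ (for $d=1$) or the curve singled out by Theorem~\ref{theorem:dP2} (for $d=2$) out of the support and \emph{still} have a non-log-canonical pair at $P$. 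The contradiction then comes from Theorems~\ref{theorem:dP1} and~\ref{theorem:dP2}, which prove that once that curve is absent from the support the pair \emph{is} log canonical at $P$; it is precisely in these conditional results that the restriction to $\mathrm{A}_1,\mathrm{A}_2,\mathrm{A}_3,\mathrm{D}_4$ (for $d=1$) and to $\mathrm{A}_1$ (for $d=2$) enters, via explicit discrepancy bounds on the minimal resolution. Your proposal is missing both ingredients: the strengthened non-log-canonicity that survives the convexity trick, and the conditional statements that stand in for the unavailable $\alpha\ge 1$.
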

Theorem~\ref{theorem:main} immediately implies the following via Lemma~\ref{theorem:KPZ-criterion}
\begin{corollary}
\label{corollary:main}
Let $S_d$ be a del Pezzo surface of degree $d$ with at most
du Val singularities. Then the affine cone over $(S_d, -K_{S_d})$ does not admit a nontrivial $\mathbb{G}_a$-action exactly when
\begin{enumerate}
\item $d=1$ and  $S_d$ allows only singular points of types $\mathrm{A}_1$, $\mathrm{A}_2$, $\mathrm{A}_3$,
$\mathrm{D}_4$ if any;
\item $d=2$ and $S_d$ allows only singular points of type $\mathrm{A}_1$ if any;
\item $d=3$ and $S_d$ allows no singular point,%
\end{enumerate}

\end{corollary}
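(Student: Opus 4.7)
The statement splits into the existence assertion (II) and the non-existence assertion (I), and my plan is to address them separately by running through the finite classification of del Pezzo surfaces with du Val singularities. For Part~II, the task is to exhibit, for every singularity configuration of $S_d$ not listed in Part~I, an explicit effective $\mathbb{Q}$-divisor $D\sim_{\mathbb{Q}}-K_{S_d}$ whose complement is a cylinder. The basic recipe I would follow is to pass to the minimal resolution $\pi\colon\tilde{S}_d\to S_d$, find a rational pencil $|\ell|$ on $\tilde{S}_d$ --- coming either from lines through a singular point, from a pencil of conics, or from the proper transform of an anticanonical pencil --- and take $D$ to be the push-forward of a combination of a reducible fiber of $|\ell|$, a section or $(-1)$-curve of the induced ruling, and the $\pi$-exceptional $(-2)$-chains, scaled to lie in the $\mathbb{Q}$-anticanonical class. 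For $d\geq 4$ the constructions of Theorem~\ref{theorem:KPZ-actions-4-9} descend to the singular degenerations with only cosmetic changes; the bulk of the work is for $d\leq 3$, where a sufficiently deep singularity provides the extra room needed, and I would handle each of the finitely many admissible singularity types individually.

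For Part~I, the smooth sub-cases are already covered by Theorem~\ref{theorem:KPZ-actions-1-2-3}, so I would focus on the mildly singular exceptions. Suppose for contradiction that $U = S_d \setminus \mathrm{Supp}(D)$ is a $(-K_{S_d})$-polar cylinder, write $D=\sum a_i D_i$, and pull everything back to $\tilde{S}_d$. The $\mathbb{A}^1$-fiber structure on $U$ induces a rational $\mathbb{P}^1$-fibration on $\tilde{S}_d$ whose degenerate fibers must be supported inside the strict transform of $D$ together with the $\pi$-exceptional divisor. Combining the numerical equality $\pi^{*}D \sim_{\mathbb{Q}} -\pi^{*}K_{S_d}$ with intersection-theoretic constraints coming from the $(-2)$-chains of the resolution, I expect to force a contradiction exactly when the singularities are as shallow as in the excluded list. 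This is essentially the log-canonical-threshold / tiger-type obstruction used in \cite{CheltsovParkWon} and \cite{KPZ12b}, now refined to accommodate each admissible du Val configuration.

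The hardest part will be Part~I for $d=1$ with a $\mathrm{D}_4$ or $\mathrm{A}_3$ singularity: these surfaces sit precisely at the boundary between the cylindrical and non-cylindrical regimes, and their minimal resolutions admit many anticanonical pencils that must all be ruled out. The matching combinatorial difficulty on the existence side is ensuring that Parts~I and II are exactly complementary, so that every configuration not listed in (I) actually admits the cylinder construction sketched above; this will require an exhaustive and carefully-structured case analysis rather than a single unified argument.
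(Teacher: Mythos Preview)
Your plan is really the paper's proof of Theorem~\ref{theorem:main} --- explicit cylinder constructions tabulated by singularity type for Part~II, and the tiger/log-canonicity obstruction (a cylinder forces an effective anticanonical $\mathbb{Q}$-divisor that is not log canonical at the base point of the induced pencil, contradicting Theorems~\ref{theorem:dP1} and~\ref{theorem:dP2}) for Part~I --- and on that level it matches the paper's approach. What you have omitted is the only step that is specific to this corollary: the statement is about nontrivial $\mathbb{G}_a$-actions on the affine cone $\hat{S}_d$, not about cylinders on $S_d$, and nowhere in your outline do you pass from one to the other. You must invoke Lemma~\ref{theorem:KPZ-criterion} (the Kishimoto--Prokhorov--Zaidenberg criterion) to reduce the $\mathbb{G}_a$-action question to the existence of a $(-K_{S_d})$-polar cylinder; in the paper Corollary~\ref{corollary:main} is literally a one-line deduction from Theorem~\ref{theorem:main} via that lemma, and everything you wrote belongs to the proof of Theorem~\ref{theorem:main}. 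Once that bridge is made explicit your outline is correct and on the same track as the paper.
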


As mentioned before, the theorem has been verified for smooth del Pezzo surfaces  in
\cite{CheltsovParkWon}, \cite{KPZ11a} and \cite{KPZ12b}. For this reason, from now on, we consider only singular del Pezzo surfaces.
The cone over an irreducible conic curve in $\mathbb{P}^3$ obviously has a cylinder. Indeed,  a quadruple ruling line is $\mathbb{Q}$-linearly equivalent to the anticanonical class and its complement is isomorphic to $\mathbb{A}^2$. Therefore, we may exclude this singular del Pezzo surface from our consideration.

\section{Preliminaries}%
\label{section:preliminaries}

\subsection{Singularities and Inequalities}

Let $S$ be a~projective surface with at most du Val singularities.
In addition,  let $D$ be an
effective $\mathbb{Q}$-divisor on $S$.

\begin{lemma}
\label{lemma:mult} If  the log pair $(S,D)$ is not log canonical at  a smooth point $P$, then
$\mathrm{mult}_{P}(D)>1$.
\end{lemma}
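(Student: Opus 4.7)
The plan is to prove the contrapositive: if $\mathrm{mult}_P(D)\leq 1$, then $(S,D)$ is log canonical at $P$. My approach is to build a log resolution of $(S,D)$ near $P$ by successive point blow-ups on a smooth surface, propagating the bound $\mathrm{mult}\leq 1$ at every stage.

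First I would blow up $P$ via $\pi\colon\tilde S\to S$ with exceptional divisor $E$. Since $P$ is smooth on $S$, the standard discrepancy formula reads
\[
K_{\tilde S}+\tilde D=\pi^*(K_S+D)+\bigl(1-\mathrm{mult}_P(D)\bigr)E,
\]
so the coefficient of $E$ in the transformed boundary equals $\mathrm{mult}_P(D)-1\leq 0$. In particular $E$ has log discrepancy $\geq 1$ and is not an obstruction to log canonicity, so it suffices to verify that $(\tilde S,\tilde D)$ is log canonical at every point $Q\in E$. The crucial observation is that, writing $D=\sum_i a_iC_i$ with $C_i$ prime components through $P$, for every $Q\in E$ one has
\[
\mathrm{mult}_Q(\tilde D)=\sum_i a_i\,\mathrm{mult}_Q(\tilde C_i)\leq \sum_i a_i\,\mathrm{mult}_P(C_i)=\mathrm{mult}_P(D)\leq 1,
\]
so the multiplicity hypothesis persists at every infinitely near point above $P$.

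Iterating, after finitely many point blow-ups I reach an snc model of $(S,D)$ above $P$. By the computation above every newly introduced exceptional divisor carries coefficient $\leq 0$, while each strict transform $\tilde C_i$ retains its original coefficient $a_i$, which satisfies $a_i\leq a_i\,\mathrm{mult}_P(C_i)\leq \mathrm{mult}_P(D)\leq 1$ because $\mathrm{mult}_P(C_i)\geq 1$ for a prime curve through $P$. All coefficients in the snc model being $\leq 1$ is exactly the definition of log canonicity at $P$, contradicting the hypothesis of the lemma. The only non-computational step is termination of the iterative blow-up procedure, which is guaranteed by embedded resolution of plane curves on a smooth surface; everything else reduces to the multiplicity-monotonicity estimate above.
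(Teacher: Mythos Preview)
Your argument is correct. The paper itself does not prove this lemma at all; it simply refers the reader to Proposition~9.5.13 in Lazarsfeld's \emph{Positivity in Algebraic Geometry~II}. What you have supplied is the standard direct proof on smooth surfaces: propagate the bound $\mathrm{mult}\le 1$ through a tower of point blow-ups, using the elementary fact that the multiplicity of the strict transform of a curve at any point of the exceptional divisor never exceeds the multiplicity at the centre (which follows from $\tilde C_i\cdot E=\mathrm{mult}_P(C_i)$).

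One point is worth making explicit. In the last paragraph you assert that on the snc model ``every newly introduced exceptional divisor carries coefficient $\le 0$.'' If this refers to the coefficient in the log pull-back of $K_S+D$, note that at step $k$ the boundary already contains the earlier exceptionals with coefficients $c_1,\dots,c_{k-1}$, so the new coefficient is
\[
c_k=\mathrm{mult}_{P_k}\bigl(\tilde D^{(k-1)}\bigr)+\sum_{i:\,P_k\in E_i}c_i\,-1,
\]
not just $\mathrm{mult}_{P_k}(\tilde D^{(k-1)})-1$. This is still $\le 0$, since by induction each $c_i\le 0$ and your monotonicity bound gives $\mathrm{mult}_{P_k}(\tilde D^{(k-1)})\le 1$; but your ``computation above'' literally only handles the strict-transform contribution. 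Alternatively, one can bypass this bookkeeping entirely by reading your first two paragraphs as a pure reduction---from $(S,D)$ at $P$ to $(\tilde S,\tilde D)$ at each $Q\in E$---and iterating until the strict transform is smooth. Either reading completes the proof.
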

\begin{proof}
See \cite[Proposition~9.5.13]{La04II}.
\end{proof}

Write $D=\sum_{i=1}^{r}a_{i}D_{i}$, where $D_{i}$'s are distinct
prime divisors  and $a_{i}$'s are positive rational numbers.
\begin{lemma}
\label{lemma:convexity} Let $T$ be an effective
$\mathbb{Q}$-divisor on $S$ other than the divisor $D$  such that 
$T\sim_{\mathbb{Q}} D$  and $\mathrm{Supp}(T)\subset \mathrm{Supp}(D)$. For every non-negative
rational number $\epsilon$, put $D_\epsilon=(1+\epsilon)D-\epsilon
T$. Then
\begin{enumerate}
\item $D_{\epsilon}\sim_{\mathbb{Q}} D$ for every
$\epsilon$; \item the set $
\left\{\epsilon\in\mathbb{Q}_{> 0}\ \vert\ D_\epsilon\ \text{is
effective}\right\} $ attains the maximum $\mu$;
 \item at least one component of the support of the divisor
$T$ is not contained in  the support
of the divisor $D_\mu$; \item if  the log pair $(S,T)$ is log canonical at a point $P$ but
$(S,D)$ is not log canonical at $P$, then the log pair $(S,D_{\mu})$ is not log
canonical at $P$.
\end{enumerate}
\end{lemma}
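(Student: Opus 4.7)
The plan is to exploit the explicit coefficient-wise description of $D_\epsilon$. Writing $T=\sum_{i=1}^{r}b_{i}D_{i}$, where some $b_{i}$ may be zero (since $\mathrm{Supp}(T)\subset\mathrm{Supp}(D)$), the divisor $D_\epsilon$ decomposes as $\sum_{i=1}^{r}\bigl(a_i+\epsilon(a_i-b_i)\bigr)D_i$. Claim (1) is then immediate from the $\mathbb{Q}$-linearity of linear equivalence, since $T\sim_{\mathbb{Q}} D$ gives $D_\epsilon=(1+\epsilon)D-\epsilon T\sim_{\mathbb{Q}}(1+\epsilon)D-\epsilon D=D$.

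For (2), I would argue by contradiction: if no index $i$ satisfied $b_i>a_i$, then $D-T=\sum(a_i-b_i)D_i$ would be effective and $\mathbb{Q}$-linearly equivalent to zero on the projective surface $S$. A standard argument (some positive multiple is the divisor of a globally regular rational function, hence of a constant) then forces $D=T$, contradicting $T\neq D$. Hence $I:=\{i:b_i>a_i\}$ is non-empty and finite, the divisor $D_\epsilon$ is effective precisely when $\epsilon\leq a_i/(b_i-a_i)$ for every $i\in I$, and the supremum is attained at $\mu=\min_{i\in I} a_i/(b_i-a_i)>0$ because all $a_i>0$. Claim (3) then drops out for free: any index $j\in I$ realising the minimum makes the coefficient $(1+\mu)a_j-\mu b_j$ vanish, so $D_j\notin\mathrm{Supp}(D_\mu)$, while $b_j>a_j\geq 0$ confirms $D_j\in\mathrm{Supp}(T)$.

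The substantive content lies in (4), which I would establish via convexity of log canonicity. The key rearrangement is
\[
D=\frac{1}{1+\mu}\,D_\mu+\frac{\mu}{1+\mu}\,T,
\]
exhibiting $D$ as a genuine convex combination of $D_\mu$ and $T$. Since log discrepancies depend linearly on the boundary, if both $(S,D_\mu)$ and $(S,T)$ were log canonical at $P$, then every discrepancy over $P$ for $(S,D)$ would satisfy $a(E,S,D)=\tfrac{1}{1+\mu}a(E,S,D_\mu)+\tfrac{\mu}{1+\mu}a(E,S,T)\geq -1$, contradicting the hypothesis that $(S,D)$ is not log canonical at $P$. The main obstacle is not computational but conceptual: the whole construction is engineered precisely so that this convex-combination identity holds, after which the implication reduces to one line of discrepancy arithmetic.
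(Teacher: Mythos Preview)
Your argument is correct and is the standard proof of this lemma; the paper itself does not give a proof but simply cites \cite[Lemma~2.2]{CheltsovParkWon}, where essentially the same coefficient computation and convexity-of-discrepancies argument appears. There is nothing to add.
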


\begin{proof}
See \cite[Lemma~2.2]{CheltsovParkWon}.
\end{proof}

The following is a ready-made adjunction for our situation. 
\begin{lemma}
\label{lemma:adjunction} Suppose that the log pair $(S,D)$ is not
log canonical at a smooth point $P$. If a component $D_j$ with $a_j\leqslant 1$
is smooth at  $P$, then
$$
D_{j}\cdot\left(\sum_{i\ne j}a_{i}D_{i}\right)\geqslant\sum_{i\ne j }a_{i}\left(D_{i}\cdot D_{j}\right)_P>1,%
$$
where $\left(D_{i}\cdot D_{j}\right)_P$ is the local intersection number of $D_i$ and $D_j$ at $P$.
\end{lemma}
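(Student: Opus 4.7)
The plan is to apply (inversion of) adjunction along the component $D_j$, exploiting two facts: the ambient surface $S$ is smooth at $P$ (by assumption in the paragraph preceding Lemma~\ref{lemma:mult}), and $D_j$ is smooth at $P$ with coefficient $a_j\leqslant 1$. Write $D=a_j D_j+\Omega$ where $\Omega=\sum_{i\neq j}a_i D_i$. Since $D_j$ is Cartier in a neighborhood of $P$ and $a_j\leqslant 1$, the standard two-dimensional inversion of adjunction (the simple case of \cite[Theorem~5.50]{KoMo}) applies: if $(S,D)$ fails to be log canonical at $P$, then the restricted pair $(D_j,\Omega|_{D_j})$ is not log canonical at $P$.

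Next I would exploit the fact that $D_j$ is a smooth curve near $P$. On a smooth curve, an effective $\mathbb{Q}$-divisor $\Omega|_{D_j}$ fails to be log canonical at a point exactly when its multiplicity at that point strictly exceeds $1$. Since $D_j\neq D_i$ for $i\neq j$ and $D_j$ is smooth at $P$, the restriction is well-defined at $P$ and satisfies
$$
\mathrm{mult}_P\bigl(\Omega|_{D_j}\bigr)=\sum_{i\neq j}a_i\,(D_i\cdot D_j)_P.
$$
Combining the two previous observations yields the strict inequality
$$
\sum_{i\neq j}a_i\,(D_i\cdot D_j)_P>1.
$$

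Finally, the left-hand inequality
$$
D_j\cdot\sum_{i\neq j}a_iD_i\;\geqslant\;\sum_{i\neq j}a_i\,(D_i\cdot D_j)_P
$$
is immediate: for each $i\neq j$ the divisors $D_i$ and $D_j$ share no common component, so the global intersection number $D_i\cdot D_j$ decomposes as the sum $\sum_Q(D_i\cdot D_j)_Q$ of non-negative local intersection numbers over all intersection points $Q$, and retaining only the term at $P$ gives the inequality.

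The main (and essentially only) obstacle to verify is that the simple form of inversion of adjunction used here applies; this is precisely why the hypotheses ``$P$ smooth on $S$,'' ``$D_j$ smooth at $P$,'' and ``$a_j\leqslant 1$'' are stated, so nothing nontrivial is required beyond invoking \cite[Theorem~5.50]{KoMo}. All other steps are routine: a local multiplicity computation on a smooth curve and the non-negativity of local intersection numbers of distinct prime divisors.
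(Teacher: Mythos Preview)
Your proposal is correct and matches the paper's approach: the paper does not give a detailed proof of this lemma at all, but simply points to \cite[Theorem~5.50]{KoMo} as the source, and your argument is precisely the standard derivation from that theorem (inversion of adjunction along the smooth curve $D_j$, followed by the trivial observation about log canonicity on a smooth curve and non-negativity of local intersection numbers).
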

\begin{proof}
See
\cite[Theorem 5.50]{KoMo}.
\end{proof}

The following is an easy application of Lemma~\ref{lemma:adjunction}.

\begin{lemma}
\label{lemma:D4}
Suppose that the surface $S$ has a singular point $P$  of type $\mathrm{D}_4$.
Let
$g\colon\tilde{S}\to S$ be the minimal resolution of the point $P$.
Denote by $E_1$, $E_2$, $E_3$ and $E_4$ the $g$-exceptional curves, where $E_3$ is the $(-2)$-curve intersecting the other three $(-2)$-curves.
Write
$$
\tilde{D}=g^{*}(D)-\sum_{i=1}^{4}a_iE_i,
$$
where $\tilde{D}$ is the proper transform of $D$ by $g$.
Then the log pair $(S,D)$ is not log canonical at  $P$ if and only if $a_3>1$.
\end{lemma}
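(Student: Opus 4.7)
The plan is to exploit the fact that $g$ is crepant for a du Val singularity: since $K_{\tilde{S}}=g^{*}(K_S)$, the pair $(S,D)$ is log canonical at $O$ if and only if $(\tilde{S},\tilde{D}+\sum_{i=1}^{4}a_{i}E_{i})$ is log canonical in a neighbourhood of the exceptional locus $g^{-1}(O)=\bigcup_{i=1}^{4}E_{i}$. With this reduction the ``if'' direction is immediate: when $a_{3}>1$, every point of $E_{3}$ lies on a boundary component of coefficient greater than $1$, so the pair on $\tilde{S}$ is not log canonical there, and its image $O$ is a non-lc centre of $(S,D)$.

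For the converse I would assume $a_{3}\leqslant 1$ and prove log canonicity. The first step is to extract the standard numerical constraints from effectivity: since $\tilde{D}$ is effective and contains no $E_{j}$, we have $\tilde{D}\cdot E_{j}\geqslant 0$. Writing this out with the $\mathrm{D}_4$ intersection matrix (self-intersections $-2$; $E_{3}\cdot E_{i}=1$ for $i\ne 3$; all other off-diagonal entries $0$) gives
$$
2a_{i}\geqslant a_{3}\ (i\in\{1,2,4\}),\qquad a_{1}+a_{2}+a_{4}\leqslant 2a_{3},
$$
and these combine to the crucial inequality $a_{i}\leqslant a_{3}\leqslant 1$ for $i\in\{1,2,4\}$ as well, so all four coefficients lie in $[0,1]$.

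The second step is to rule out a non-lc point of $(\tilde{S},\tilde{D}+\sum a_{i}E_{i})$ on $\bigcup E_{i}$ by repeatedly applying Lemma~\ref{lemma:adjunction}. At a point $P\in E_{3}$ that is not a node, applying the lemma to $E_{3}$ would force $(\tilde{D}\cdot E_{3})_{P}>1$, whereas globally $\tilde{D}\cdot E_{3}=2a_{3}-a_{1}-a_{2}-a_{4}\leqslant a_{3}/2\leqslant 1/2$. At the node $P=E_{i}\cap E_{3}$ for $i\in\{1,2,4\}$, the same lemma applied to $E_{3}$ would give $a_{i}+(\tilde{D}\cdot E_{3})_{P}>1$, but $a_{i}+(\tilde{D}\cdot E_{3})_{P}\leqslant 2a_{3}-a_{j}-a_{k}\leqslant a_{3}\leqslant 1$, where $\{j,k\}=\{1,2,4\}\setminus\{i\}$ and $a_{j}+a_{k}\geqslant a_{3}$. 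Finally, at a point of $E_{i}$ with $i\in\{1,2,4\}$ that is not the node, the lemma applied to $E_{i}$ would demand $(\tilde{D}\cdot E_{i})_{P}>1$, yet $\tilde{D}\cdot E_{i}=2a_{i}-a_{3}\leqslant a_{3}\leqslant 1$. The main technical obstacle is exactly this bookkeeping: the three regimes (generic point of $E_{3}$, node of the exceptional divisor, generic point of a leaf $E_{i}$) all collapse to constraints dominated by $a_{3}\leqslant 1$, but only after the bootstrap $a_{i}\leqslant a_{3}$, which in turn requires combining all four effectivity inequalities at once.
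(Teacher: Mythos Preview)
Your argument is correct and is precisely the ``easy application of Lemma~\ref{lemma:adjunction}'' that the paper advertises (the paper itself does not write out a proof but cites \cite[Lemma~2.5]{ChK10}). The key steps---crepancy of $g$, the bootstrap $a_i\leqslant a_3$ from the effectivity inequalities $2a_i\geqslant a_3$ and $a_1+a_2+a_4\leqslant 2a_3$, and the case split over generic points of $E_3$, nodes $E_i\cap E_3$, and generic points of the leaves $E_i$---are exactly what is needed, and your numerical bounds in each case are tight enough.
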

\begin{proof}
See \cite[Lemma~2.5]{ChK10}.
\end{proof}

\begin{remark}
\label{remark:log-pull-back}
Let $f\colon \bar{S}\to S$ be the blow up of  the surface $S$ at
a smooth point $P$ with the exceptional divisor $E$ and let $\bar{D}$
be the proper transform of $D$ by $f$. Then we have
$$
K_{\bar{S}}+\bar{D}+\left(\mathrm{mult}_{P}(D)-1\right)E=f^{*}\left(K_{S}+D\right).
$$
The log pair $(S,D)$ is log canonical at  $P$ if and only if the
log pair $(\bar{S}, \bar{D}+(\mathrm{mult}_{P}(D)-1)E)$ is log
canonical along  $E$.
 If $(S,D)$ is not log
canonical at  $P$, then there exists a point $Q$ on  $E$ at which
 $(\bar{S}, \bar{D}+(\mathrm{mult}_{P}(D)-1)E)$ is
not log canonical. Lemma~\ref{lemma:mult} then implies
\begin{equation}
\mathrm{mult}_{P}(D)+\mathrm{mult}_{Q}(\bar{D})>2.
\end{equation}
 If $\mathrm{mult}_{P}(D)\leqslant 2$,
then  $(\bar{S},
\bar{D}+(\mathrm{mult}_{P}(D)-1)E)$ is log canonical at every
point on $E$ except the point $Q$. Indeed, if
it is not log canonical at
another point $O$ on $E$, then Lemma~\ref{lemma:adjunction}
yields  a contradiction, 
$$
2\geqslant\mathrm{mult}_{P}(D)=\bar{D}\cdot E\geqslant\mathrm{mult}_{Q}(\bar{D})+\mathrm{mult}_{O}(\bar{D})>2.%
$$
\end{remark}
The following lemma will be useful for the article.
\begin{lemma}
\label{lemma:Trento}  Let $C_1$ and $C_2$ be irreducible curves on the surface $S$ that both are
smooth at a smooth point $P$ and intersect transversally at $P$. In addition, let $\Omega$ be an effective
$\mathbb{Q}$-divisor on  $S$ whose support
contains neither $C_1$ nor  $C_2$.
Suppose that the log pair $(S, a_{1}C_{1}+a_{2}C_{2}+\Omega)$ is not log canonical at  $P$
 for some non-negative rational numbers $a_1$, $a_2$.
 If $\mathrm{mult}_{P}(\Omega)\leqslant 1$, then either
$$\mathrm{mult}_{P}(\Omega\cdot C_{1})>2(1-a_{2})\ \ \  \mbox{ or } \ \ \
\mathrm{mult}_{P}(\Omega\cdot C_{2})>2(1-a_{1}).$$
\end{lemma}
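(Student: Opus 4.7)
My plan is to argue by induction on the length of a minimal log resolution of $(S, a_1 C_1 + a_2 C_2 + \Omega)$ at $P$, using at each step a single blow-up at $P$ together with the adjunction estimate of Lemma~\ref{lemma:adjunction}. First I dispose of the easy regimes: if $a_i \geq 1$ for some $i$, then $2(1-a_i) \leq 0 \leq \mathrm{mult}_P(\Omega \cdot C_{3-i})$ and the conclusion is automatic, so I may assume $a_1, a_2 \in [0,1)$. Set $m = \mathrm{mult}_P(\Omega) \leq 1$. Consider next the case $a_1 + a_2 + m > 2$: since $C_i$ is smooth at $P$ and not contained in $\mathrm{Supp}(\Omega)$, the local intersection $\mathrm{mult}_P(\Omega \cdot C_i)$ is at least $m$; picking $i$ so that $a_{3-i} = \max(a_1, a_2)$ gives $\mathrm{mult}_P(\Omega \cdot C_i) \geq m > 2 - a_1 - a_2 \geq 2(1 - a_{3-i})$, as required.

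In the remaining case $a_1 + a_2 + m \leq 2$, I blow up $P$: let $f\colon \bar{S} \to S$ be this blow-up, with exceptional curve $E$ and proper transforms $\bar{C}_i, \bar{\Omega}$. Setting $b = a_1 + a_2 + m - 1 \leq 1$, one has
\[
f^*(K_S + a_1 C_1 + a_2 C_2 + \Omega) = K_{\bar{S}} + a_1 \bar{C}_1 + a_2 \bar{C}_2 + \bar{\Omega} + b E,
\]
and by Remark~\ref{remark:log-pull-back} this pair fails to be log canonical at some $Q \in E$. Transversality of $C_1, C_2$ at $P$ forces $\bar{C}_1 \cap E$ and $\bar{C}_2 \cap E$ to be distinct points; applying Lemma~\ref{lemma:adjunction} with $D_j = E$ rules out $Q \notin \bar{C}_1 \cup \bar{C}_2$, for otherwise $\mathrm{mult}_Q(E \cdot \bar{\Omega}) > 1$, contradicting $\mathrm{mult}_Q(E \cdot \bar{\Omega}) \leq E \cdot \bar{\Omega} = m \leq 1$. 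By symmetry I may assume $Q = \bar{C}_1 \cap E$.

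At $Q$ the curves $\bar{C}_1, E$ are smooth, transversal, and not contained in $\mathrm{Supp}(\bar{\Omega})$, while $\mathrm{mult}_Q(\bar{\Omega}) \leq \mathrm{mult}_Q(E \cdot \bar{\Omega}) \leq m \leq 1$. Since the pair $(\bar{S}, a_1 \bar{C}_1 + bE + \bar{\Omega})$ at $Q$ requires strictly fewer point blow-ups to resolve than the original, the inductive hypothesis yields $\mathrm{mult}_Q(\bar{\Omega} \cdot \bar{C}_1) > 2(1 - b)$ or $\mathrm{mult}_Q(\bar{\Omega} \cdot E) > 2(1 - a_1)$. In the second alternative $m \geq \mathrm{mult}_Q(\bar{\Omega} \cdot E) > 2(1-a_1)$, hence $\mathrm{mult}_P(\Omega \cdot C_2) \geq m > 2(1 - a_1)$. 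In the first, the formula $\mathrm{mult}_P(\Omega \cdot C_1) = m + \mathrm{mult}_Q(\bar{\Omega} \cdot \bar{C}_1) > 4 - 2a_1 - 2a_2 - m$ gives $\mathrm{mult}_P(\Omega \cdot C_1) > 2(1-a_2)$ whenever $m \leq 2(1-a_1)$, and otherwise $\mathrm{mult}_P(\Omega \cdot C_2) \geq m > 2(1-a_1)$ again. The main obstacle is that the inductive step re-encounters the same lemma at a new point; this is resolved by the choice of induction parameter (the length of a minimal log resolution over $P$), which strictly decreases after a point blow-up and makes the log-smooth base case vacuous.
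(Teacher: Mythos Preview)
Your argument is correct and is exactly the standard blow-up induction by which this inequality is proved; the paper itself gives no proof, merely citing \cite[Theorem~13]{Ch13}, where the same scheme (reduce to $a_1+a_2+m\leqslant 2$, blow up $P$, use adjunction on $E$ to pin the non--log-canonical point $Q$ to one of the $\bar C_i$, and recurse) is carried out.

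One cosmetic point: in your opening reduction you claim that $a_i\geqslant 1$ makes the conclusion automatic because $2(1-a_i)\leqslant 0\leqslant\mathrm{mult}_P(\Omega\cdot C_{3-i})$, but the lemma requires a \emph{strict} inequality, which fails in the edge case $a_i=1$, $m=0$. This is harmless---if $m=0$ and $a_i=1$ then near $P$ the pair is simple normal crossing with coefficients $1$ and $a_{3-i}$, hence log canonical unless $a_{3-i}>1$, in which case the other alternative $\mathrm{mult}_P(\Omega\cdot C_i)\geqslant 0>2(1-a_{3-i})$ holds---but it deserves a word. Everything else (the bound $b\in(0,1]$ via Lemma~\ref{lemma:mult}, the identity $\mathrm{mult}_P(\Omega\cdot C_1)=m+\mathrm{mult}_Q(\bar\Omega\cdot\bar C_1)$, and the final dichotomy on whether $m\leqslant 2(1-a_1)$) is clean.
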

\begin{proof}
See \cite[Theorem~13]{Ch13}.
\end{proof}

From now on, on a projective surface, an effective  $\mathbb{Q}$-divisor $\mathbb{Q}$-linearly equivalent to the anticanonical class of the surface will be called \emph{an effective anticanonical $\mathbb{Q}$-divisor} and a member of the anticanonical linear system will be called \emph{an effective anticanonical divisor}.

\subsection{Singularity types}

For singular del Pezzo surfaces $S$ of degrees $1$ and $2$ not listed in Theorem~\ref{theorem:main} (I),  the construction method of their  $(-K_S)$-polar cylinders will be given according to the singularity types of the surfaces $S$.  For this purpose, we adopt the following definition.

\begin{definition} Let $S_1$ and $S_2$ be del Pezzo surfaces with at most du Val singularities and let $\tilde{S}_1$ and $\tilde{S}_2$ be their minimal resolutions, respectively.
We say that the del Pezzo surfaces $S_1$ and $S_2$ have the same singularity type (or the minimal resolutions $\tilde{S}_1$ and
$\tilde{S}_2$ have the same type) if there is an isomorphism of the
Picard group of $\tilde{S}_1$ to the
Picard group of   $\tilde{S}_2$ preserving the intersection form that gives a bijection between their sets of classes of negative curves.
\end{definition}

Note that the minimal resolutions of  del Pezzo surfaces with  du Val singularities are smooth weak del Pezzo surfaces, i.e., smooth projective surfaces with nef and big anticanonical divisors. 

It is known that the types of smooth weak del Pezzo surfaces of degree $d$ are in one-to-one correspondence  to
the subsystems of the root systems of types $\textnormal{E}_8$, $\textnormal{E}_7$, $\textnormal{E}_6$, $\textnormal{D}_5$, $\textnormal{A}_4$, $\textnormal{A}_2+\textnormal{A}_1$, $\textnormal{A}_1$,
respectively, for $d=1, \ldots, 7,$ with four exceptions: $8\textnormal{A}_1$,
$7\textnormal{A}_1$, $\textnormal{D}_4+4\textnormal{A}_1$ for $d=1$ and $7\textnormal{A}_1$ for $d=2$
(see \cite{AN06}, \cite{BW79}, \cite{Co88}, \cite{De13}, \cite{Pi77},
\cite{U83}).

 Since  the isomorphisms of the Picard groups of the weak del Pezzo surfaces of the same type  preserve the intersection forms, we can conclude
 from \cite[Th\'eor\`eme~III.2 and Corollaire]{Dema80} that a given singularity
 type has a unique configuration of $(-1)$-curves and $(-2)$-curves. The type of smooth weak del Pezzo surface is uniquely determined by its degree and its  configuration
 of $(-1)$-curves and $(-2)$-curves.
 Consequently, for a
 given singularity type of del Pezzo surfaces of degree $d$, if we find one weak del Pezzo surface of degree $d$ whose
 corresponding singular del Pezzo surface has the given
 singularity type, then this weak del Pezzo surface gives us the configuration
 of $(-1)$-curves and $(-2)$-curves for  the given singularity type
 since every del Pezzo surface of the same
 singularity type has the same configuration of $(-1)$-curves and
 $(-2)$-curves on its weak del Pezzo surface.

 For the singularity types of del Pezzo surfaces of degrees $\leqslant 2$ with at most du Val singularities, 
 we refer to the table in  \cite{U83}.
 The table completely classifies subsystems of the root systems $\textnormal{E}_7$ and $\textnormal{E}_8$
 up to actions of their Weyl groups.

On a del Pezzo surface of a given degree $d$, the configuration of the $(-2)$-curves on the corresponding smooth weak del Pezzo surface does not determine the type uniquely.
In such a case, there are precisely two types.
The following are the ADE-types (with the degrees $d\leq 2$) that have two different singularity types. \bigskip

\begin{minipage}[m]{.95\linewidth}
\begin{itemize}
 \item[$d=1.$] $\textnormal{A}_7$, $\textnormal{A}_5+\textnormal{A}_1$, $2\textnormal{A}_3$,
$\textnormal{A}_3+2\textnormal{A}_1$, $4\textnormal{A}_1$;
\item[$d=2.$] $\textnormal{A}_5+\textnormal{A}_1$, $\textnormal{A}_5$, $\textnormal{A}_3+2\textnormal{A}_1$
$\textnormal{A}_3+\textnormal{A}_1$, $4\textnormal{A}_1$, $3\textnormal{A}_1$;
\end{itemize}
\end{minipage}
\bigskip\\
We need
to distinguish these singularity types of del Pezzo surfaces of degrees $d\leq 2$ with the same ADE-types. However, we do not have to consider the ADE-types $2\textnormal{A}_3$,
$\textnormal{A}_3+2\textnormal{A}_1$, $4\textnormal{A}_1$ for $d=1$ and $4\textnormal{A}_1$, $3\textnormal{A}_1$ for $d=2$
due to Theorem~\ref{theorem:main}~(I).
For the remaining ADE-types, the vertex $v$ in the Dynkin diagram of $\textnormal{A}_{2n+1}$, $n\geqslant 1$,  is called the central vertex if $\textnormal{A}_{2n+1}-v=2\textnormal{A}_n$.

For  the ADE-types
$\textnormal{A}_7$ ($d=1$),
$\textnormal{A}_5$ ($d=2$), $\textnormal{A}_5+\textnormal{A}_1$ ($d=2$), $\textnormal{A}_3$ ($d=4$),
we use
$(\textnormal{A}_7)'$, $(\textnormal{A}_5)'$,  $(\textnormal{A}_5+\textnormal{A}_1)'$ and $(\textnormal{A}_3)'$ if there are $(-1)$-curves intersecting the $(-2)$-curves corresponding to
the central vertices and we use  $(\textnormal{A}_7)''$, $(\textnormal{A}_5)''$,  $(\textnormal{A}_5+\textnormal{A}_1)''$ and $(\textnormal{A}_3)''$ if there are not such $(-1)$-curves.

For  the ADE-types
$\textnormal{A}_5+\textnormal{A}_1$ ($d=1$),  $\textnormal{A}_3+\textnormal{A}_1$ ($d=2$),
$\textnormal{A}_3+2\textnormal{A}_1$  ($d=2$),
we use
$(\textnormal{A}_5+\textnormal{A}_1)'$,  $(\textnormal{A}_3+\textnormal{A}_1)'$,
$(\textnormal{A}_3+2\textnormal{A}_1)'$  if there are $(-1)$-curves intersecting the $(-2)$-curves corresponding to the central vertices and the vertices of $\textnormal{A}_1$  and we use
$(\textnormal{A}_5+\textnormal{A}_1)''$, $(\textnormal{A}_3+\textnormal{A}_1)''$,
$(\textnormal{A}_3+2\textnormal{A}_1)''$  if there are not such $(-1)$-curves.

\section{Absence of Cylinders}
\subsection{Del Pezzo surfaces of degree $1$}
\label{sec:dP1}

Let $S$ be a del Pezzo surface of degree $1$ with at most du
Val singularities. Then its anticanonical linear system $|-K_{S}|$ is a pencil that has a unique
base point. Denote its base point by $O$. Note that the base point $O$ must be a smooth point of the surface $S$.

\begin{theorem}
\label{theorem:dP1}  
Let $D$ be an effective  anticanonical $\mathbb{Q}$-divisor on $S$. 
\begin{enumerate}
\item The log pair $(S,D)$ is log canonical outside of finitely many points.
\item It is log canonical at the point $O$.
\end{enumerate}
Let $P$ be either  a smooth point different from $O$ or  a singular point of type $\mathrm{A}_1$,  $\mathrm{A}_2$,  $\mathrm{A}_3$ or $\mathrm{D}_4$ and let $C$ be the curve  in the pencil $|-K_{S}|$ that passes through 
$P$. 
\begin{enumerate}
\item[(3)]If the log pair $(S,D)$ is  not log canonical at  $P$, then
\end{enumerate}
\begin{itemize}
\item the log pair
$(S,C)$ is not log canonical at $P$;
\item the support of $D$ contains the support of $C$. 
\end{itemize}
\end{theorem}

\begin{proof}

Since $-K_S$ is ample, the first statement immediately follows from $-K_{S}\cdot D=1$

For a general member $Z$ in the anticanonical linear system $|-K_{S}|$, we have
$$
1=Z\cdot
D\geqslant\mathrm{mult}_{O}(Z)\mathrm{mult}_{O}(D)\geqslant
\mathrm{mult}_{O}(D).
$$
It then follows from  Lemma~\ref{lemma:mult} that the log pair $(S,D)$ is
log canonical at the base point $O$.

Now we consider a point $P$ on $S$ other than $O$. 
For (3) we first prove that 
$(S,D)$ is log canonical at $P$ if the support of $D$ does not contain the support of $C$. For this purpose, we suppose that
the support of the curve $C$ is not contained in the support of $D$. Note that $C$ is irreducible.

If $P$ is a smooth point, then we can obtain
$$
1=C\cdot
D\geqslant\mathrm{mult}_{P}(C)\mathrm{mult}_{P}(D)\geqslant
\mathrm{mult}_{P}(D),
$$
which implies that  $(S,D)$ is log canonical at $P$ by
Lemma~\ref{lemma:mult}.

Now we suppose that $P$ is a singular point of the surface $S$. Let $f\colon\tilde{S}\to S$ be the  minimal resolution of the singular point $P$.  Denote by $E_1,\ldots,E_r$ the
$f$-exceptional curves, denote by $\tilde{D}$ the proper
transform of the divisor $D$ on the surface $\tilde{S}$ and
denote by $\tilde{C}$ the proper transform of the curve $C$ on the
surface $\tilde{S}$. Then there are non-negative rational numbers
$a_1,\ldots,a_r$ such that
$$
K_{\tilde{S}}+\tilde{D}+\sum_{i=1}^{r}a_iE_i=f^*(K_{S}+D)\sim_{\mathbb{Q}}
0.
$$
We can immediately see how the proper transform  $\tilde{C}$ of the effective anticanonical divisor $C$  intersects the exceptional divisors $E_i$ (for instance, see \cite[Appendix]{Pa03}).

Suppose that  $P$  is a singular point of type $\mathrm{D}_4$. Then $r=4$ and we may assume that the exceptional divisor $E_3$ is the $(-2)$-curve that intersects all the other three $(-2)$-curves. We see from \cite[Appendix]{Pa03} that  $\tilde{C}\cdot E_3=1$ and $\tilde{C}\cdot
E_1=\tilde{C}\cdot E_2=\tilde{C}\cdot E_4=0$. We then obtain
$$
1-a_3=\left(f^{*}(-K_{S})-\sum_{i=1}^{r}a_iE_i\right)\cdot\tilde{C}=\tilde{D}\cdot\tilde{C}\geqslant 0.%
$$
Lemma~\ref{lemma:D4} therefore implies that   $(S,D)$ is log canonical at  $P$.

Suppose that $P$ is a singular point of type $\mathrm{A}_r$.
 We assume that $E_1$ and $E_r$ are the
tail curves, i.e., the $(-2)$-curves intersecting only one $(-2)$-curve, respectively. Then the curve $\tilde{C}$ intersects $E_1$ and $E_r$, respectively,  at one point  transversally (if $r=1$, then $\tilde{C}\cdot E_1=2$). But it does not intersect the other $(-2)$-curves. Therefore,
$$
1-a_1-a_r=\left(f^*(-K_{S})-\sum_{i=1}^{r}a_iE_i\right)\cdot \tilde{C}=\tilde{D}\cdot\tilde{C}\geqslant 0,%
$$
and hence $a_1+a_r\leqslant 1$ (if $r=1$, then
$a_1\leqslant\frac{1}{2}$).

Consider the case $r=1$.  Since $\tilde{D}\cdot E_1=2a_1\leqslant 1$, the log pair $(\tilde{S}, \tilde{D}+a_1E_1)$ is log canonical along the exceptional curve $E_1$ by Lemma~\ref{lemma:adjunction}. Therefore,  $(S, D)$ is log canonical at~$P$.

Next we consider the case $r=2$. We then have $a_1+a_2\leqslant 1$. Moreover, we obtain  $2a_1\geqslant a_2$ from the inequality
$$
2a_1-a_2=\tilde{D}\cdot E_1\geqslant 0.
$$
Similarly,
$2a_2\geqslant a_1$. Since $a_1+a_2\leqslant 1$, we may assume that $a_1\leqslant \frac{1}{2}$.
We obtain   $(\tilde{D}+a_2E_2)\cdot E_1=2a_1\leqslant 1$, and hence
$(\tilde{S}, \tilde{D}+ a_1E_1+a_2E_2)$ is log canonical along the curve $E_1$.
Furthermore, the inequality $$\tilde{D}\cdot E_2=2a_2-a_1\leqslant 2a_1+(a_2-a_1)=a_1+a_2\leqslant 1$$ implies that $(\tilde{S}, \tilde{D}+ a_1E_1+a_2E_2)$ is log canonical along the curve $E_2$. Consequently, $(S, D)$ is log canonical at~$P$.

Finally we consider the case $r=3$.  We have $a_1+a_3\leqslant 1$. Moreover, we may obtain $2a_1\geqslant a_2$, $2a_2\geqslant a_1+a_3$ and
$2a_3\geqslant a_2$ from
$$
\left\{\aligned%
&2a_1-a_2=\tilde{D}\cdot E_1\geqslant 0,\\
&2a_2-a_1-a_3=\tilde{D}\cdot E_2\geqslant 0, \\
&2a_3-a_2=\tilde{D}\cdot E_3\geqslant 0. \\
\endaligned
\right.
$$
We  may assume that $a_1\leqslant \frac{1}{2}$  since $a_1+a_3\leqslant 1$. Since $(\tilde{D}+a_2E_2+a_3E_3)\cdot E_1=2a_1\leqslant 1$,
the log pair $(\tilde{S}, \tilde{D}+ a_1E_1+a_2E_2+a_3E_3)$ is log canonical along the curve $E_1$.  
In addition,  $(\tilde{S}, \tilde{D}+ a_1E_1+a_2E_2+a_3E_3)$ is log canonical along $E_2\setminus E_3$  and $E_3\setminus E_2$  since $$
\left\{\aligned%
& \tilde{D}\cdot E_2=2a_2-a_1-a_3\leqslant 2(a_1+a_3)- (a_1+a_3)=a_1+a_3\leqslant 1, \\
& \tilde{D}\cdot E_3=2a_3-a_2\leqslant (2a_2-a_1)+a_3-a_2\leqslant a_1+a_3\leqslant 1.\\
\endaligned
\right.
$$
Let $Q$ be the intersection point of $E_2$ and $E_3$. 
We have
$$
\left\{\aligned%
& \tilde{D}\cdot E_2=2a_2-a_1-a_3\leqslant (4a_1-a_1+a_3)-2a_3=2a_1+(a_1+a_3)-2a_3\leqslant 2(1-a_3),\\
& \tilde{D}\cdot E_3=2a_3-a_2=2a_3+a_2-2a_2\leqslant 2a_3+2a_1-2a_2 \leqslant2(1-a_2). \\
\endaligned
\right.
$$
Since $\mathrm{mult}_{Q}(\tilde{D})\leqslant \tilde{D}\cdot E_3=2a_3-a_2\leqslant 1$,
Lemma~\ref{lemma:Trento} implies that  
$(\tilde{S},\tilde{D}+a_2E_2+a_3E_3)$  is log canonical at $Q$, and hence 
$(\tilde{S},\tilde{D}+a_1E_1+a_2E_2+a_3E_3)$  is log canonical at $Q$.
Consequently,   $(\tilde{S}, \tilde{D}+ a_1E_1+a_2E_2+a_3E_3)$ is log canonical along the three exceptional curves, and hence  $(S, D)$ is log canonical at  $P$.

If the log pair $(S, C)$ is log canonical at $P$, then we can obtain an effective anticanonical $\mathbb{Q}$-divisor $D_{\mu}$  from Lemma~\ref{lemma:convexity} such that $(S,D_{\mu})$  is not log canonical at $P$ and whose support does not contain the support of $C$. This however contradicts what we have proven so far. Therefore, $(S, C)$ is not log canonical at $P$.
\end{proof}

It is a common experience that  $\mathrm{D}_4$-singularity is more singular than $\mathrm{A}_4$-singularity. However, to our surprise, Theorem~\ref{theorem:dP1}~(3) does not hold for a  singular point of type $\mathrm{A}_4$  even though
every singular point of type $\mathrm{D}_4$ enjoys Theorem~\ref{theorem:dP1}~(3).

\subsection{Del Pezzo surfaces of degree $2$}
\label{sec:dP2}

Let $S$ be a del Pezzo surface of degree $2$ with at most ordinary
double points. Its anticanonical linear system $|-K_{S}|$ is base-point-free and induces a double cover $\pi\colon S\to \mathbb{P}^2$
ramified along a reduced quartic curve $R\subset\mathbb{P}^2$.
Moreover, the curve $R$ has at most ordinary double points. Note
that the curve $R$ may be reducible.

Let $D$ be an effective  anticanonical $\mathbb{Q}$-divisor on $S$.

\begin{lemma}
\label{lemma:dP2-isolated} Write
$$ D=\mu C+\Omega,$$
where $\mu$ is a non-negative rational number, $C$ is an irreducible curve and $\Omega$ is an effective $\mathbb{Q}$-divisor  whose support does not
contain the curve $C$.
If $\mu>1$, then $-K_{S}\cdot C=1$ and $\pi(C)$ is a
line in $\mathbb{P}^2$ that is an irreducible component of $R$.
\end{lemma}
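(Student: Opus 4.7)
Intersecting $D\sim_{\mathbb{Q}}-K_{S}$ with the ample Cartier divisor $-K_{S}$ yields
$$
2=(-K_{S})^{2}=D\cdot(-K_{S})=\mu\bigl((-K_{S})\cdot C\bigr)+\Omega\cdot(-K_{S})\geqslant\mu\bigl((-K_{S})\cdot C\bigr).
$$
Since $S$ is Gorenstein, $(-K_{S})\cdot C$ is a positive integer; combined with $\mu>1$ this forces $(-K_{S})\cdot C=1$. Via $-K_{S}=\pi^{*}\mathcal{O}_{\mathbb{P}^{2}}(1)$ and the projection formula, the equality becomes $1=(\deg\pi|_{C})(\deg\pi(C))$, so $\pi|_{C}$ is birational and $L:=\pi(C)$ is a line in $\mathbb{P}^{2}$. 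It remains to show that $L$ must be a component of $R$.

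Suppose to the contrary that $L\not\subseteq R$. Then $\pi^{*}(L)$ is reduced. If it is irreducible, the prime divisor $C$ equals $\pi^{*}(L)$, so $\pi|_{C}$ has degree $2$, contradicting the previous paragraph. Thus $\pi^{*}(L)=C+C^{\prime}$ for a second prime divisor $C^{\prime}$, with both $C$ and $C^{\prime}$ isomorphic to $L\cong\mathbb{P}^{1}$ via $\pi$; in particular both are smooth and rational. Set $a=C^{2}$ and $b=C\cdot C^{\prime}$, which are well-defined $\mathbb{Q}$-intersection numbers because $C+C^{\prime}=-K_{S}$ is Cartier. From $(C+C^{\prime})\cdot C=(-K_{S})\cdot C=1$ we obtain $a+b=1$.

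The crucial input is the bound $b\geqslant 1$. On the minimal resolution $f\colon\tilde{S}\to S$, the proper transform $\tilde{C}$ is a smooth rational $(-1)$-curve by adjunction (using $K_{\tilde{S}}=f^{*}K_{S}$ for du Val singularities). Each $\mathrm{A}_{1}$-singularity through which $C$ passes corresponds to a node of $R$ on $L$, each such node contributes $2$ to $L\cdot R=4$, so there are at most two such nodes. A direct calculation on $\tilde{S}$ then gives $C^{2}=-1+k/2$ for some $k\in\{0,1,2\}$, whence $a\leqslant 0$ and $b\geqslant 1$. Consequently,
$$
\Omega\cdot C^{\prime}=(-K_{S}-\mu C)\cdot C^{\prime}=1-\mu b\leqslant 1-\mu<0,
$$
so the effective divisor $\Omega$ must contain $C^{\prime}$ in its support. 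Writing $\Omega=\nu C^{\prime}+\Omega^{\prime}$ with $\nu>0$ and $C^{\prime}\not\in\mathrm{Supp}(\Omega^{\prime})$, the non-negativity conditions $\Omega^{\prime}\cdot C\geqslant 0$ and $\Omega^{\prime}\cdot C^{\prime}\geqslant 0$ furnish upper and lower bounds on $\nu$; checking these case by case for $k\in\{0,1,2\}$ forces $\mu\leqslant 1$, contradicting $\mu>1$.

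The main obstacle is this last sub-case, because one must handle the non-integer self-intersections that arise when $C$ passes through $\mathrm{A}_{1}$-singularities of $S$, and verify that the compatibility of the bounds on $\nu$ really does force $\mu\leqslant 1$ uniformly across the three configurations of $k$. The remaining steps reduce to one short global intersection computation and a routine degree argument for $\pi|_{C}$.
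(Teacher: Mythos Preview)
Your argument is correct and follows the same overall strategy as the paper: deduce $-K_S\cdot C=1$ from the degree estimate, split $\pi^*(L)=C+C'$ when $L\not\subset R$, and exploit the bound $C^2=C'^2=-1+k/2\leqslant 0$ coming from the fact that $L$ meets at most two nodes of $R$. The case analysis you propose in the last step does go through (for $k=0,1,2$ the inequalities $\Omega'\cdot C\geqslant 0$ and $\Omega'\cdot C'\geqslant 0$ combine to give exactly $\mu\leqslant 1$), so there is no genuine gap; just note that you are tacitly using $C'^2=C^2$, which holds because $C$ and $C'$ pass through the same singular points of $S$.

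The paper avoids your three-case check by a single stroke. After writing $\Omega=\mu'C'+\Delta$ it first observes $\mu+\mu'\leqslant 2$, hence $\mu'<1$; then from $C+C'\sim_{\mathbb Q}\mu C+\mu'C'+\Delta$ it gets $(1-\mu')C'\sim_{\mathbb Q}(\mu-1)C+\Delta$ and intersects both sides with $C'$ to obtain
\[
(1-\mu')\,C'^2=(\mu-1)\,C\cdot C'+\Delta\cdot C'\geqslant(\mu-1)\,C\cdot C'>0,
\]
forcing $C'^2>0$, which immediately contradicts $C'^2\leqslant 0$. This packages your three inequalities into one and makes the dependence on $k$ disappear; your route reaches the same contradiction but with more bookkeeping.
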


\begin{proof}
Since $-K_S$ is ample, $-K_S\cdot C$ is a positive integer.
The equality $-K_{S}\cdot C=1$  immediately follows from
$$
2=-K_{S}\cdot\left(\mu C+\Omega\right)=-\mu K_{S}\cdot
C-K_{S}\cdot\Omega\geqslant-\mu K_{S}\cdot C>-K_{S}\cdot C.
$$
 This shows that $\pi(C)$ is
a line in $\mathbb{P}^2$.

Suppose that $\pi(C)$ is not an irreducible component of $R$. Then
there exists a curve $C^\prime$ different from $C$ such that $C+C^\prime\sim -K_{S}$ and
$\pi(C^\prime)=\pi(C)$. Write  $\Omega=\mu^\prime
C^\prime+\Delta$, where $\Delta$ is an effective
$\mathbb{Q}$-divisor on $S$ whose support does not contain the
curve $C^\prime$. Since $\mu>1$, we obtain $\mu'<1$. Therefore, by taking $\frac{1}{1-\mu'} \left(D-\mu'(C+C')\right)$ instead of $D$, we may assume that $\mu'=0$.

Since the intersection number $C\cdot C'$ belongs to $\frac{1}{2}\mathbb{Z}$, from $$1=D\cdot C'=\mu C\cdot C'+\Omega\cdot C' > C\cdot C',$$
we conclude that $C\cdot C'=\frac{1}{2}.$ Therefore, $C^2=-K_S\cdot C-C\cdot C'=\frac{1}{2}$. This implies that $C$ passes through three ordinary double points, which is impossible unless $\pi(C)$ is an irreducible component of $R$.
\end{proof}

\begin{theorem}\label{theorem:dP2-smooth-points}
Let $P$ be a smooth point of $S$.
\begin{enumerate}
\item\label{item:1} If $\pi(P)\not\in R$, then $(S, D)$ is log canonical at $P$.
\end{enumerate}
Suppose $\pi(P)\in R$ and let $T_P$ be the unique divisor in
$|-K_{S}|$ that is singular at  $P$.
\begin{enumerate}
\item[(2)]\label{item:2} If the log pair $(S,D)$ is  not log canonical at  $P$, then
\end{enumerate}
\begin{itemize}
\item the log pair
$(S,T_P)$ is not log canonical at $P$;
\item the support of $D$ contains the support of $T_P$. 
\end{itemize}
\end{theorem}

\begin{proof}
For (1), the proof of \cite[Lemma~3.2]{CheltsovParkWon} works verbatim even though we allow more than two ordinary double points.

For (2), we suppose $\pi(P)\in R$.
If the divisor $T_P$ is reduced, then  the proofs of \cite[Lemmas~3.4 and 3.5 ]{CheltsovParkWon}  verifies the statement.

If the divisor $T_P$ is not reduced,
 then $T_P=2C$ for some irreducible smooth curve $C$ and  $\pi(C)$ is a line in
$\mathbb{P}^2$ that is an irreducible component of the  quartic curve $R$.
It is clear that $(S, T_P)$ is not log canonical at $P$.
If $C\not\subset\mathrm{Supp}(D)$,  then
$$ 1=C\cdot
D\geqslant\mathrm{mult}_{P}(C)\mathrm{mult}_{P}(D)\geqslant\mathrm{mult}_{P}(D),%
$$
 and hence $(S, D)$ is log canonical at $P$ by Lemma~\ref{lemma:mult}. Therefore,  the support of $D$  must contain the support of $C$.  \end{proof}

Suppose that  $(S, D)$ is not log canonical at a  singular point $P$ of $S$.
Let $f\colon\bar{S}\to S$ be the blow up of  $S$ at $P$. Denote
by $E$ the $f$-exceptional curve and denote by $\bar{D}$ the
proper transform of the divisor $D$ on the surface $\bar{S}$.
Then
$$
\bar{D}=f^*(D)-aE\sim_{\mathbb{Q}}f^*(-K_{S})-aE
$$
for some positive rational number $a$. This gives
$$
K_{\bar{S}}+\bar{D}+aE=f^*(K_{S}+D)\sim_{\mathbb{Q}} 0,%
$$
which implies that  $(\bar{S}, \bar{D}+aE)$ is not
log canonical at some point $Q$ on  $E$ by
Remark~\ref{remark:log-pull-back}.

Let $H$ be a general curve in $|-K_{S}|$ that passes through $P$.
Denote by   $\bar{H}$ its proper transform on the surface
$\bar{S}$. Then $\bar{H}\cdot E=2$. We have
$$
0\leqslant\bar{H}\cdot\bar{D}=\bar{H}\cdot\left(f^*(-K_{S})-aE\right)=2-2a,
$$
which gives $a\leqslant 1$. Now applying
Lemma~\ref{lemma:adjunction} to $(\bar{S},aE+\bar{D})$ and
$E$, we get
$$
2a=E\cdot\bar{D}\geqslant\mathrm{mult}_{Q}\left(E\cdot\bar{D}\right)>1.
$$
Consequently, we see $\frac{1}{2}<a\leqslant 1$.
Since $a\leqslant 1$, the log pair $(\bar{S}, \bar{D}+aE)$ is
log canonical at every point of $E$ other than the point $Q$ by Remark~\ref{remark:log-pull-back}.

Since $-K_{\bar{S}}= f^*(-K_{S})$, the linear system
$|-K_{\bar{S}}-E|$ is a pencil. In fact, it is a base-point-free pencil.
A general curve in $|-K_{\bar{S}}-E|$ is a smooth rational curve
that intersects $E$ by two distinct points. Moreover, since
$|-K_{\bar{S}}-E|$ does not have any base points, there exists a
unique curve $C\in|-K_{S}|$ whose proper transform $\bar{C}$ by $f$ passes through the point $Q$.

\begin{theorem}\label{theorem:dP2} Let $P$ be an ordinary double point of $S$ and let $C$ be the curve  in $|-K_S|$ described above.
If the log pair $(S, D)$ is not log canonical at $P$, then \begin{itemize}
\item the log pair $(S, C)$ is not log canonical at $P$;
\item the support of $D$ contains the support of $C$.
\end{itemize}
\end{theorem}

\begin{proof} We suppose that  that $\mathrm{Supp}(D)$ does not contain
the support of $C$ and then look for a contradiction.
We have three cases  as below.

\medskip

\textbf{Case 1.} The curve $C$ is not reduced.

Then $C=2L$, where $L$ is a smooth
rational curve on $S$ such that $\pi(L)$ is a line in
$\mathbb{P}^2$  and it is an irreducible component of the curve $R$.

Denote by $\bar{L}$ the proper transform of the curve $L$ on the
surface $\bar{S}$. Then the point $Q$ belongs to $\bar{L}$ by the choice of $C$. Since
$L\not\subset\mathrm{Supp}(D)$, then
$$
1-a=\bar{L}\cdot\bar{D}\geqslant
\mathrm{mult}_{Q}\left(\bar{D}\right),
$$
and hence that $1\geqslant a+\mathrm{mult}_{Q}(\bar{D})=\mathrm{mult}_{Q}(\bar{D} +a E)$.
Therefore,  $(\bar{S}, \bar{D}+aE)$ is log
canonical at~$Q$ by Lemma~\ref{lemma:mult}. This is a contradiction. 
\medskip

\textbf{Case 2.} The curve $C$ is reduced and irreducible.

 Put
$m=\mathrm{mult}_{Q}(\bar{D})$. From
$$
2-2a=\bar{C}\cdot\bar{D}\geqslant
m,%
$$
we obtain $m+2a\leqslant 2$. Note that $m\leqslant 2-2a<1$ since $a>\frac{1}{2}$.

Let $g\colon\check{S}\to\bar{S}$ be the blow up of the surface $\bar{S}$ at the point $Q$.
Denote by $F$ the $g$-exceptional curve  and
denote by $\check{E}$ and $\check{D}$ the proper transforms of the divisors $E$ and $\bar{D}$ on the
surface $\check{S}$, respectively. Then
$$
K_{\check{S}}+\check{D}+a\check{E}+(a+m-1)F=g^*\left(K_{S}+aE+D\right),
$$
and 
$(\check{S},\check{D}+a\check{E}+(a+m-1)F)$ is not log canonical at some
point $O$ of the exceptional curve~$F$.

Since $a+m-1\leqslant 1$, the inequality
$$
\mathrm{mult}_{O}\left(\check{D}\right)\leqslant F\cdot \check{D}=m\leqslant 1
$$
implies that $(\check{S},\check {D}+(a+m-1)F)$ is  log canonical along the divisor $F$ by Lemma~\ref{lemma:adjunction}. Therefore, the point $O$ must be the intersection point of $F$ and $\check{E}$.

Since
$\mathrm{mult}_{O}(\check{D})\leqslant\mathrm{mult}_{Q}(\bar{D})=m\leqslant
1$, we can apply
Lemma~\ref{lemma:Trento} to the log pair
$(\check{S},\check{D}+a\check{E}+(a+m-1)F)$  at the point $O$, so that we obtain either
$$
2a-m=\check{D}\cdot\check{E}>2(2-a-m) \ \ \ \mbox{or} \ \ \ m=\check{D}\cdot F>2(1-a).$$
However, both the inequalities are impossible
since $m+2a\leqslant 2$. This is a contradiction.

\medskip

\textbf{Case 3.} The curve $C$ is reduced but reducible.

The curve $C$ consists of two distinct smooth irreducible and reduced curves $L_1$ and $L_2$.  Note that  $-K_{S}\cdot L_1=-K_{S}\cdot L_2=1$ and these two curves intersect at the point $P$. Without loss of
generality, we may assume that the curve $L_1$ is not contained in the support of $D$.
Then we put $D=bL_2+\Omega$, where $b$ is a non-negative rational
number and $\Omega$ is an effective $\mathbb{Q}$-divisor on $S$ whose support does not contain the curve $L_2$.
Denote by $\bar{L}_1$, $\bar{L}_2$ and $\bar{\Omega}$ the proper transforms of
the curves $L_1$, $L_2$ and the divisor $\Omega$ on the surface $\bar{S}$,
respectively.
Note that  $\bar{L}_1\cdot E=\bar{L}_2\cdot E=1$.

The point $Q$ cannot belong to the curve $\bar{L}_1$. Indeed, if so, then
$$
1-a=\bar{L}_1\cdot\bar{D}\geqslant
\mathrm{mult}_{Q}\left(\bar{D}\right),
$$
and hence $\mathrm{mult}_{Q}\left(\bar{D} +aE\right)\leqslant 1$. Since
 $(\bar{S},
\bar{D}+aE)$ is  not log canonical at $Q$, this is absurd. Therefore,
the point $Q$ must belong to the curve $\bar{L}_2$.

Recall that $\pi(L_1)=\pi(L_2)$ is a line in $\mathbb{P}^2$ that
passes though the point $\pi(P)$. Since $Q\not\in\bar{L}_1$ and  $Q\in\bar{L}_2$, the
intersection $L_1\cap L_2$ consists of two distinct points, one of which is the point $P$. Thus, the intersection
$\bar{L}_1\cap\bar{L}_2$ consists of a single point. This
point can be either  a smooth point or an ordinary double point of the surface
$\bar{S}$. In the former case, we have
$\bar{L}_1\cdot\bar{L}_2=1$ and
$\bar{L}_1^2=\bar{L}_2^2=-1$. In the latter case, we have
$\bar{L}_1\cdot\bar{L}_2=\frac{1}{2}$ and
$\bar{L}_1^2=\bar{L}_2^2=-\frac{1}{2}$.

From
$$
1-a-b(\bar{L}_1\cdot\bar{L}_2)=\bar{\Omega}\cdot\bar{L}_1\geqslant 0%
$$
we obtain $a+b(\bar{L}_1\cdot\bar{L}_2)\leqslant 1$.
Therefore,
$$\bar{L}_{2}\cdot\bar{\Omega}=1-a-b\bar{L}_2^2\leqslant (1-a)\left(1-\frac{\bar{L}_2^2}{\bar{L}_1\cdot \bar{L}_2}\right)=2(1-a)$$
and
$$E\cdot\bar{\Omega}=2a-b \leqslant 2- b(2\bar{L}_1\cdot\bar{L}_2+1)\leqslant 2(1-b).$$

Meanwhile, from $$
\left\{\aligned%
&1-a-b\bar{L}_2^2=\bar{L}_{2}\cdot\bar{\Omega}\geqslant \mathrm{mult}_{Q}(\bar{\Omega}),\\
&2a-b=E\cdot\bar{\Omega}\geqslant \mathrm{mult}_{Q}(\bar{\Omega}),\\
\endaligned
\right.
$$
we obtain $ 2\mathrm{mult}_{Q}(\bar{\Omega}) \leqslant 1+a-(1+\bar{L}_2^2)b\leqslant 1+a$.
Therefore, $\mathrm{mult}_{Q}(\bar{\Omega}) \leqslant 1$. This enables us to apply
Lemma~\ref{lemma:Trento} to 
 $(\bar{S}, \bar{\Omega}+aE+b\bar{L}_2)$ at the point $Q$.  The log pair $(\bar{S}, \bar{\Omega}+aE+b\bar{L}_2)$ must be log canonical at  $Q$. This is a contradiction.
 \medskip
 
 These three cases lead to the conclusion that the support of $D$ contain the curve $C$ if $(S, D)$ is not log canonical at $P$.
\medskip

In Case~1, it is obvious that $(S, C)$ is  not log canonical at $P$.
 In Cases~2 and 3, if $(S, C)$ is log canonical at $P$, then we can obtain an effective anticanonical $\mathbb{Q}$-divisor $D_{\mu}$  from Lemma~\ref{lemma:convexity} such that $(S,D_{\mu})$  is not log canonical at $P$ and whose support does not contain the support of $C$. This however contradicts what we have proven in Cases 2 and 3. Therefore, $(S, C)$ is not log canonical at $P$.
\end{proof}

\subsection{Proof of Theorem~\ref{theorem:main} (I)}
\label{sec:cylinders}
Now we are ready to prove Theorem~\ref{theorem:main} (I), i.e.,  if a surface $S$ is either a del Pezzo surface of degree $2$ with only ordinary double points or
a del Pezzo surface of degree $1$ with du Val singularities of types $\mathrm{A}_1$, $\mathrm{A}_2$, $\mathrm{A}_3$,
$\mathrm{D}_4$ only, then it
cannot admit any
$(-K_{S})$-polar cylinder.

To this end, we suppose that the del Pezzo surface $S$ contains a
$(-K_{S})$-polar cylinder and then we look for a
contradiction.

Since $S$ contains a $(-K_{S})$-polar cylinder, there is
 an effective anticanonical
$\mathbb{Q}$-divisor $D$
such that $U=S\setminus\mathrm{Supp}(D)$ is isomorphic to an affine variety $ Z\times
\mathbb{A}^1$ for some smooth rational affine curve $Z$. Put
$D=\sum_{i=1}^{r}a_i D_i$, where each $D_i$ is an irreducible and
reduced curve and each $a_i$ is a positive rational number.

\begin{lemma}
\label{lemma:rank-Cl-group}  The components of $D$ generate  the divisor class group
$\mathrm{Cl}(S)$ of the surface $S$. In particular,  the number of the irreducible components of $D$ is at least the rank of $\mathrm{Cl}(S)$.
\end{lemma}
\begin{proof}
See
\cite[Lemma~4.6]{KPZ12b}.
\end{proof}

The natural projection $U\cong Z\times\mathbb{A}^1\to Z$ induces a
rational map $\phi\colon S\dasharrow\mathbb{P}^1$.
Denote by $\mathcal{L}$ the
 pencil  on the surface $S$ that induces the rational map $\phi$.
Then either the pencil
$\mathcal{L}$ is base-point-free or its base locus consists of a
single point.

\begin{lemma}
\label{lemma:base-point} The pencil $\mathcal{L}$ is not
base-point-free.
\end{lemma}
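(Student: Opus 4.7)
The plan is to argue by contradiction: assume $\mathcal{L}$ is base-point-free, so that the rational map $\phi$ extends to a morphism $\phi\colon S\to\mathbb{P}^{1}$. A general member $F$ of $\mathcal{L}$ is irreducible (being the closure of a general $\mathbb{A}^{1}$-fiber of the projection $U\cong Z\times\mathbb{A}^{1}\to Z$) and satisfies $F^{2}=0$. Adjunction gives $-K_{S}\cdot F=2-2p_{a}(F)$, and since $-K_{S}$ is ample and $F$ effective, $-K_{S}\cdot F>0$ forces $p_{a}(F)=0$ and $-K_{S}\cdot F=2$, so $F\cong\mathbb{P}^{1}$. Because $F\cap U\cong\mathbb{A}^{1}$, the intersection $F\cap\mathrm{Supp}(D)$ consists of a single point $P_{F}$; as $F$ varies in $\mathcal{L}$ the points $P_{F}$ trace an irreducible horizontal curve which must coincide with one component $D_{1}$ of $D$, while the remaining components $D_{2},\ldots,D_{r}$ each lie in a fiber of $\phi$.

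Writing $D=a_{1}D_{1}+\sum_{i\geqslant 2}a_{i}D_{i}$, the identity $D\cdot F=2$ becomes $a_{1}(F\cdot D_{1})=2$, and intersecting $D\sim_{\mathbb{Q}}-K_{S}$ with $D_{1}$ (using that du Val singularities are Gorenstein, so that $-K_{S}$ is an ample Cartier divisor and $-K_{S}\cdot D_{1}$ is a positive integer) yields $a_{1}(-K_{S}\cdot D_{1})\leqslant K_{S}^{2}=d$. For $d=1$ this forces $a_{1}\leqslant 1$ and $F\cdot D_{1}\geqslant 2$, so $D_{1}$ is at least a bisection of $\phi$; for $d=2$ either $a_{1}=2$ with $D_{1}$ of anticanonical degree one (so $\pi(D_{1})$ is a line in the ramification $R$ by Lemma~\ref{lemma:dP2-isolated}), or $a_{1}\leqslant 1$ with $F\cdot D_{1}\geqslant 2$. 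In both degrees the bound $r\geqslant\mathrm{rk}\,\mathrm{Cl}(S)$ of Lemma~\ref{lemma:rank-Cl-group} forces enough vertical components of $D$ to be present in the reducible fibers of $\phi$ to span $\mathrm{Pic}(S)_{\mathbb{Q}}$ together with $D_{1}$.

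To finish, I would combine these numerical facts with the geometry particular to each case. For $d=1$ the unique base point $O$ of $|-K_{S}|$ lies on a fixed fiber of $\phi$, and $\phi|_{C}\colon C\to\mathbb{P}^{1}$ has degree two for any general $C\in|-K_{S}|$; combining this with Theorem~\ref{theorem:dP1} (log canonicity of $(S,D)$ at the admissible singular types $\mathrm{A}_{1},\mathrm{A}_{2},\mathrm{A}_{3},\mathrm{D}_{4}$) should force $\mathrm{Supp}(D)$ to contain a curve producing a non-log-canonical point at a location precluded by the hypothesis. For $d=2$ I would exploit $\pi\colon S\to\mathbb{P}^{2}$: the horizontal $D_{1}$ projects to a plane curve constrained by Lemma~\ref{lemma:dP2-isolated}, and the assumption that $S$ has only ordinary double points (so $R$ is a nodal quartic) severely restricts the lines contained in $R$, ruling out the configuration. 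The main obstacle is precisely this last step: the numerical data leave several configurations open (classes for $D_{1}$, distributions of vertical components, positions of $O$ or of $R$ relative to $\phi$), and eliminating them all will require an Euler characteristic estimate for the number of reducible fibers of $\phi$ combined with a case-by-case exclusion using the log canonicity results of Sections~\ref{sec:dP1} and \ref{sec:dP2}.
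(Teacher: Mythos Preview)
Your setup is correct through the identification of a unique horizontal component $D_1$ and the relations $a_1(F\cdot D_1)=2$ and $a_1(-K_S\cdot D_1)\leqslant d$. The gap is that you stop short of the decisive conclusion: $D_1$ is a \emph{section} of $\phi$, i.e.\ $F\cdot D_1=1$. You already observed that $F\cap\mathrm{Supp}(D)$ is a single point for general $F$, so $D_1$ meets each general fibre set-theoretically in exactly one point; in characteristic zero this forces the finite map $\phi|_{D_1}\colon D_1\to\mathbb{P}^1$ to have degree one (a generically injective finite morphism of curves over an algebraically closed field of characteristic $0$ is birational). Thus the alternative ``$a_1\leqslant 1$ with $F\cdot D_1\geqslant 2$'' that you carry through both degrees never occurs, and $a_1=2$ is forced.

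With $a_1=2$ the proof finishes at once, exactly as in the paper. For $d=1$ your own inequality $a_1(-K_S\cdot D_1)\leqslant 1$ is already violated (equivalently, Lemma~\ref{lemma:dP1-isolated}: a coefficient exceeding $1$ makes $(S,D)$ non--log canonical along an entire curve). For $d=2$, Lemma~\ref{lemma:dP2-isolated} applied with $\mu=a_1=2>1$ gives $-K_S\cdot D_1=1$ and shows that $\pi(D_1)$ is a line contained in the branch quartic $R$; hence $-K_S\sim 2D_1$, the degree count $\sum a_i(-K_S\cdot D_i)=2$ forces $r=1$, and Lemma~\ref{lemma:rank-Cl-group} yields $\mathrm{rk}\,\mathrm{Cl}(S)\leqslant 1$, contradicting $\mathrm{rk}\,\mathrm{Cl}(S)\geqslant 8-6=2$ (a degree-$2$ del Pezzo with only $\mathrm{A}_1$ points has at most six of them). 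Your proposed route via Theorems~\ref{theorem:dP1} and~\ref{theorem:dP2}, Euler-characteristic counts of reducible fibres, and case-by-case exclusion is unnecessary for this lemma; those log-canonicity results enter only later, in the proof of Theorem~\ref{theorem:main}~(I), after Lemma~\ref{lemma:pseudo-convexity} has produced a non--log canonical point.
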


\begin{proof}
Suppose that the pencil $\mathcal{L}$ is base-point-free. Then
$\phi$ is a morphism, which implies that there exists exactly one
irreducible component of $\mathrm{Supp}(D)$ that does not lie in
the fibers of $\phi$. Moreover, this irreducible component is a section. Without loss of
generality, we may assume that this component is $D_r$. Let $L$ be
a sufficiently general curve in  $\mathcal{L}$. Then
$$
2=-K_{S}\cdot L=D\cdot L=\sum_{i=1}^{r}a_i D_i\cdot L=a_rD_r\cdot
L,
$$
and hence $a_r=2$.

By Theorem~\ref{theorem:dP1}~(1), the surface $S$ cannot be of degree $1$, and hence it must be of degree $2$.
Then the anticanonical linear system $|-K_{S}|$ is base-point-free and
induces a double cover $\pi\colon S\to \mathbb{P}^2$ ramified
along a reduced quartic curve $R\subset\mathbb{P}^2$. Moreover,
the curve $R$ has at most ordinary double points. Furthermore, it
follows from Lemma~\ref{lemma:dP2-isolated} that $\pi(D_r)$ is a line in $\mathbb{P}^2$ that is an
irreducible component of $R$. Therefore,  $-K_{S}\sim 2D_r$, and hence $r=1$.
This  implies that the rank of the divisor class group of $S$ is
one by Lemma~\ref{lemma:rank-Cl-group}. However,
since the curve $R$ has at most six
singular points, the surface $S$ can attain at most six ordinary double points. Therefore, the rank of the divisor class group of $S$ is at least two. This is a contradiction.
\end{proof}

Denote the unique base point of the pencil $\mathcal{L}$  by $P$.
Resolving the base locus of the pencil $\mathcal{L}$ we obtain a
commutative diagram
$$
\xymatrix{ &W\ar[dl]_{f_1}\ar[dr]^{f_2}& \\
S\ar@{-->}[rr]^{\phi}&&\mathbb{P}^1,}
$$
where $f_1$ is a composition of blow ups at smooth points over the
point $P$ and $f_2$ is a morphism whose general fiber is a smooth
rational curve. Denote by $E_1, \ldots, E_n$ the exceptional
curves of the birational morphism $f_1$. Then there exists exactly
one curve among them that does not lie in the fibers of the
morphism $f_2$. Without loss of generality,
we may assume that this curve is $E_n$. The curve $E_n$ is a section of
the morphism $f_2$.

For every $D_i$, denote by $\hat{D}_i$ its proper transform on the
surface $W$. Every curve $\hat{D}_i$ lies in a fiber of the
morphism $f_2$.

\begin{lemma}
\label{lemma:pseudo-convexity} Let $D^\prime$ be an effective anticanonical 
$\mathbb{Q}$-divisor on $S$ with $\mathrm{Supp}(D^\prime)\subseteq\mathrm{Supp}(D)$. Denote
by $\hat{D}^\prime$ its proper transform on $W$. Then
$$
K_{W}+\hat{D}^\prime=f_1^{*}\left(K_{S}+D^\prime\right)+\sum_{i=1}^{n}c_iE_i\sim_{\mathbb{Q}}\sum_{i=1}^{n}c_iE_i%
$$
 for some rational numbers $c_1, \ldots, c_n$.
Moreover, we have $c_n=-2$. In particular, the log pair
$(S,D^\prime)$ is not log canonical at the point $P$.
\end{lemma}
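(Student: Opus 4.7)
The plan is to derive the formula by a standard discrepancy computation and then pin down $c_n$ by intersecting with a general fiber of $f_2$.

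First, since $f_1\colon W\to S$ is a composition of blow-ups at smooth points lying over the smooth point $P\in S$, there is a standard discrepancy decomposition $K_W=f_1^*K_S+\sum_{i=1}^{n}a_iE_i$, and writing $\hat{D}'=f_1^*D'-\sum_{i=1}^{n}m_iE_i$ with $m_i=\mathrm{mult}_{E_i}(f_1^*D')$ yields
$$K_W+\hat{D}'=f_1^*(K_S+D')+\sum_{i=1}^{n}c_iE_i,\qquad c_i:=a_i-m_i.$$
Because $D'\sim_{\mathbb{Q}}-K_S$, the divisor $K_S+D'$ is $\mathbb{Q}$-trivial, and the $\mathbb{Q}$-linear equivalence $K_W+\hat{D}'\sim_{\mathbb{Q}}\sum c_iE_i$ follows.

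To compute $c_n$, I would intersect this $\mathbb{Q}$-equivalence with a general fiber $F$ of $f_2$. Since $F$ is a general fiber of a morphism from the smooth surface $W$ to $\mathbb{P}^1$, with rational general fiber (the pencil $\mathcal{L}$ comes from the cylinder projection, whose general leaf is $\mathbb{A}^1$), the curve $F$ is smooth rational and $F^2=0$; by adjunction $K_W\cdot F=-2$. Each $\hat{D}_i$ lies in a fiber of $f_2$, so a general $F$ avoids all of them and $\hat{D}'\cdot F=0$. On the right-hand side, $E_i\cdot F=0$ for $i<n$ because each such $E_i$ is contained in a fiber, while $E_n\cdot F=1$ because $E_n$ is a section of $f_2$. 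Combining these, $-2=c_n$.

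For the last statement, the equality $c_n=-2<-1$ says that the discrepancy of the pair $(S,D')$ along the divisorial valuation given by $E_n$ is strictly below $-1$, so $(S,D')$ is not log canonical at the center of $E_n$ in $S$; since $E_n$ is $f_1$-exceptional and the whole exceptional locus of $f_1$ maps to $P$, this center is $\{P\}$, proving the last assertion.

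I do not anticipate a real obstacle: the argument is essentially the projection formula together with adjunction on a general fiber. The only points requiring care are the smoothness of $P$ on $S$ (already established earlier in both the degree $1$ and degree $2$ settings, and implicit in the statement that $f_1$ blows up smooth points) and the fact that the generic fiber of $f_2$ is smooth rational, which is a routine consequence of the cylinder-induced rationality of the pencil $\mathcal{L}$.
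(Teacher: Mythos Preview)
Your argument is correct and follows essentially the same route as the paper: both compute $c_n$ by intersecting the relation $K_W+\hat{D}'\sim_{\mathbb{Q}}\sum c_iE_i$ with a general fiber of $f_2$, using that $E_n$ is a section while the $\hat{D}_i$ and the remaining $E_i$ all lie in fibers. Your write-up is slightly more explicit about the discrepancy formalism and about why $c_n=-2$ forces non-log-canonicity at $P$, but the substance is the same.
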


\begin{proof}
The existence of rational numbers $c_1, \ldots, c_n$ is
obvious. We must show that $c_n=-2$. Write
$D^\prime=\sum_{i=1}^{r}b_i D_i$, where each $b_i$
is a non-negative rational number. Let $L$ be a
sufficiently general fiber of the morphism $f_2$. Then
$$
-2=K_{W}\cdot L=K_{W}\cdot L+\sum_{i=1}^{r}b_i \hat{D}_i\cdot L=\sum_{i=1}^{n}c_iE_i\cdot L=c_n,%
$$
because $E_n$ is a section of the morphism $f_2$, every curve
$\hat{D}_i$ lies in a  fiber of the morphism $f_2$ and every curve
$E_i$ with $i<n$ also lies in a fiber of the morphism $f_2$. Hence,
$c_n=-2$ and~$(S,D^\prime)$ is not log canonical
at $P$.
\end{proof}

Lemma~\ref{lemma:pseudo-convexity} shows that  if a del Pezzo surface $S'$ with at worst du Val singularities  contains a
$(-K_{S'})$-polar cylinder, then the surface $S'$ must possess
an effective anticanonical $\mathbb{Q}$-divisor $B$ such that the log pair $(S', B)$ is not log canonical. Such an effective anticanonical
$\mathbb{Q}$-divisor is called a  \emph{tiger} on the del Pezzo surface $S'$ (cf. \cite{KeMc99}).
In particular, applying Lemma~\ref{lemma:pseudo-convexity} to~$(S,D)$, we see
that~$(S,D)$ is not log canonical at $P$.
\bigskip

\emph{Proof of Theorem~\ref{theorem:main} (I).}
\medskip

\textbf{Case 1.}  The surface $S$ is of degree $1$.

By Theorem~\ref{theorem:dP1}~(2), $P$ is not the
base point of the pencil $|-K_{S}|$. Thus, there exists a unique
curve $C$ in the pencil $|-K_{S}|$ that passes through  $P$.
If the rank of the divisor class group of $S$ is greater  than one,  then
$D\ne C$ by Lemma~\ref{lemma:rank-Cl-group}. If the rank is one, then the open set $S\setminus C$ must contain a singular point.
But $S\setminus\mathrm{Supp}(D)\cong\mathbb{A}^1\times Z$ is smooth. Therefore, $D\ne C$.

Let $\mu$ be the greatest rational number such that
$D^\prime=(1+\mu)D-\mu C$ is effective. Then $(S,D^\prime)$ is not log canonical at  $P$ by
Lemma~\ref{lemma:pseudo-convexity}. This contradicts
Theorem~\ref{theorem:dP1}~(3).

\medskip

\textbf{Case 2.}  The surface $S$ is of degree $2$.

We have the double cover $\pi\colon S\to
\mathbb{P}^2$ ramified along a reduced quartic curve
$R\subset\mathbb{P}^2$ given by the anticanonical linear system.

The surface $S$ has at most six ordinary double points. If it has six ordinary double points, then the quartic curve $R$ consists  of
four distinct lines on $\mathbb{P}^2$. In other words,
the rank of the divisor class group of $S$ is at least two  and if it is two, then the quartic curve $R$ consists  of
four distinct lines on $\mathbb{P}^2$.

Note that the point $\pi(P)$ must belong to the quartic curve $R$
 by
Theorem~\ref{theorem:dP2-smooth-points}~(\ref{item:1}).

Suppose that the quartic curve $R$ is smooth at $\pi(P)$.
Let $T_P$ be the unique curve in $|-K_{S}|$ that is singular at
$P$. Then the log pair $(S,T_P)$ is not log canonical at $P$ by
Theorem~\ref{theorem:dP2-smooth-points}~(2).

The curve $T_P$
 consists of
at most two irreducible components. Thus, if
the rank of the divisor class group of $S$ is at least $3$, then $D\ne T_P$ by
Lemma~\ref{lemma:rank-Cl-group}. If
the rank of the divisor class group of $S$ is two, then $R$ is a union of four
distinct lines. This implies that the support of $T_P$ is
 an
irreducible curve. Therefore, $D\ne T_P$ by
Lemma~\ref{lemma:rank-Cl-group}.

Let $\mu$ be the greatest rational number such that
$D^\prime=(1+\mu)D-\mu T_P$ is effective. Then~$(S,D')$
is not log canonical at $P$ by
Lemma~\ref{lemma:pseudo-convexity}. This contradicts
Theorem~\ref{theorem:dP2-smooth-points}~(2).

Thus, the point $P$ must be  a singular point of the surface $S$. Let
$f\colon\bar{S}\to S$ be the blow up of the surface $S$ at $P$. Then there
exists a commutative diagram
$$
\xymatrix{ &&&W\ar[ddl]_{f_1}\ar[dlll]_t\ar[ddr]^{f_2}& \\
\bar{S}\ar[drr]_f&&&&\\
&&S\ar@{-->}[rr]^{\phi}&&\mathbb{P}^1,}
$$
where $t$ is a birational morphism. Denote by $E$ the
$f$-exceptional curve and denote by $\bar{D}$ the proper
transform of the divisor $D$ on the surface $\bar{S}$. The image of $E_n$ by the birational morphism $t$ is a point on the exceptional curve $E$. Denote this point by $Q$.

Note that the log pair $(\bar{S}, f^*(D))$ is not log canonical at  $Q$.

Let $C$ be the unique curve in the anticanonical linear system $|-K_S|$ whose proper transform  by the blow up $f$ passes through the point $Q$.

The curve $C$ has at most two irreducible components. If the curve $C$ is irreducible, then $D\ne C$
by Lemma~\ref{lemma:rank-Cl-group}. Suppose that the curve $C$ has two irreducible components.
If the rank of the divisor class group of $S$ is greater than two,  then
$D\ne C$ by Lemma~\ref{lemma:rank-Cl-group}.
If the rank of the divisor class group of $S$ is two, then $R$ is a union of four
distinct lines, and hence  $(S,C)$ is log canonical. By
Theorem~\ref{theorem:dP2},  $(S,D)$ must be log
canonical as well. This is a contradiction. 

Let $\mu$ be the greatest rational number such that
$D^\prime=(1+\mu)D-\mu C$ is effective. The log
pair $(S,D^\prime)$ is not log canonical at $P$ and   $(\bar{S},f^*(D^\prime))$ is 
not log canonical at $Q$ by
Lemma~\ref{lemma:pseudo-convexity}. The same curve $C$ is the curve in $|-K_S|$ 
whose proper transform  by the blow up $f$ passes through $Q$.
By Theorem~\ref{theorem:dP2} the curve $C$ is contained in the support of $D'$. This is a contradiction. 
\hfill$\square$

\begin{remark}\label{remark:tiger} In order to prove Theorem~\ref{theorem:main}~(I), we use Theorems~\ref{theorem:dP1},~\ref{theorem:dP2-smooth-points} and \ref{theorem:dP2}. In fact, they show that under the condition of Theorem~\ref{theorem:main}~(I) the support of each tiger
on $S_d$ contains 
the support of an effective anticanonical  divisor. Furthermore, on a del  Pezzo surface $S_{d}$ not listed in 
Theorem~\ref{theorem:main}~(I) we can always find a tiger whose support does not contain the support of any effective anticanonical divisor. Indeed,  using the effective anticanonical divisors listed in \cite{PaW11} and the cylinders constructed in the present article, we have hunted such tigers. Since this case-by-case hunting is a tedious job, we do not present such tigers here.   We may rephrase Theorem~\ref{theorem:main} as follows:\\
\emph{Let $S$ be a del Pezzo surface with at most du Val singularities. It has no $(-K_S)$-cylinder if and only if the support of each tiger  on $S$ contains 
the support of an effective anticanonical  divisor.}
\end{remark}

\section{Construction of cylinders}
In this section, we prove  Theorem~\ref{theorem:main}~(II). For a given singular del Pezzo surface $S$ not listed in Theorem~\ref{theorem:main} (I) we find an effective anticanonical $\mathbb{Q}$-divisor $D_S$ such that the complement of the support of $D_S$ is isomorphic to $\mathbb{A}^1\times Z$ for some smooth rational affine curve $Z$.

\subsection{Construction for high degrees}

We first construct $(-K_S)$-polar cylinders for singular del Pezzo surfaces of degree $3$ with only du Val singularities. Using this construction, we also obtain $(-K_S)$-polar cylinders for singular del Pezzo surfaces of degrees  $\geq 4$.

\begin{theorem}\label{theorem:cubic-projection}
Let $S$ be a singular cubic surface in $\mathbb{P}^3$ with  only du Val singularities. Let $P$ be a singular point of $S$ and let $L_1,\ldots, L_r$  be the lines on $S$ passing through $P$.
\begin{itemize}
\item There are  positive rational numbers $a_1,\ldots, a_r$  such that  the effective $\mathbb{Q}$-divisor $$a_1L_1+\ldots+a_rL_r$$ is 
$\mathbb{Q}$-linearly equivalent to $-K_S$.
\item If $P$ is an ordinary double point, for a hyperplane section $L$ of $S$ such that it has a cuspidal point at $P$ the set $$S\setminus (L\cup L_1\cup\ldots\cup L_r)$$ is a cylinder.
\item If $P$ is a non-ordinary double point, then the set $$S\setminus (L_1\cup\ldots\cup L_r)$$ is a cylinder.
\end{itemize}
In particular, $S$ has a $(-K_S)$-polar cylinder.
\end{theorem}
\begin{proof}
It is easy to see that there are lines $L_1,\ldots, L_r$ passing through $P$ and $1\leq r\leq 6$.

Let $\pi: S\dasharrow \mathbb{P}^2$ be the projection from $P$. It is a birational map and it contracts exactly  the lines $L_1,\ldots, L_r$. Let $f:\bar{S}\to S$ be the blow up at the point $P$ and $E$ be its exceptional divisor. Then the map $g:=\pi \circ f:\bar{S}\to \mathbb{P}^2$ is defined everywhere. Let $C$ be the image of $E$ by $g$ on $\mathbb{P}^2$. It  is a conic curve. Furthermore, it contains all the points $\pi(L_i)$.
If $P$ is an ordinary double point, then $C$ is a smooth conic. If $P$ is of type $\mathrm{A}_n$, $n\geq 2$, then $C$ consists of two distinct lines. If $P$ is of type either $\mathrm{D}_n$ or $\mathrm{E}_n$, then $C$ is a (double) line.

Since the map $\pi$ is defined by the linear system of hyperplane sections passing through $P$, the pull-back of $\frac{1}{2}C$ by $\pi$ is an effective anticanonical $\mathbb{Q}$-divisor.
Moreover,  we immediately see
\[\pi^*(\frac{1}{2}C)=a_1L_1+\ldots+a_rL_r\]
for some positive rational numbers $a_i$  because the curve $C$ passes through all the points $\pi(L_1),\ldots, \pi(L_r)$.

It is easy to see 
\[S\setminus (L_1\cup\ldots\cup L_r)\cong 
\bar{S} \setminus (E\cup \bar{L}_1\cup\ldots\cup \bar{L}_r) \cong
\mathbb{P}^2\setminus \mathrm{Supp}(C),\]
where $\bar{L}_i$ is the proper transform of $L_i$ by $f$.

Suppose that $P$ is a non-ordinary double point.  Since the support of  $C$ consists of at most two lines, its complement is a cylinder.

Suppose that $P$ is an ordinary double point. Then the conic $C$ is smooth. 
Let $L$ be a hyperplane section of $S$ that has a cuspidal point at $P$. 
For a hyperplane section of $S$ to have a cuspidal point, it has to be irreducible. Therefore, the hyperplane section $L$ cannot meet $L_i$ at a point other than $P$. 

The proper transform of $L$ by $f$ is contained in the smooth locus of the surface $\bar{S}$ and it is  a smooth curve that meets the exceptional curve $E$ tangentially at a single point. Note that it does not meet any of $\bar{L}_i$'s. Therefore, its image by $g$ is a line tangent to the curve $C$. Note that the pull-back of a line tangent to $C$ at a point other than $\pi(L_i)$  by the map $\pi$ is such a hyperplane section as $L$. Consequently, the set 
 $$S\setminus (L\cup L_1\cup\ldots\cup L_r)$$ is a cylinder. This completes the proof.
\end{proof}

\begin{theorem}
Let $S$ be a del Pezzo  surface of degree $d\geq 4$ with  at worst du Val singularities. Then it always contains
a $(-K_S)$-polar cylinder. 
\end{theorem}
\begin{proof}
Recall that we consider only  del Pezzo surfaces of degrees $\leq 7$ 
for the reason explained right after Corollary~\ref{corollary:main}.

Let $\tilde{S}$ be the minimal resolution of $S$ and let  $A$ be a $(-1)$-curve on $\tilde{S}$. 
Let $\rho_1:\Sigma_1\to \tilde{S}$ be the blow up of $\tilde{S}$ at a general point on $A$ and let $E_1$ be its exceptional curve. Let $\rho_2:\Sigma_2\to \Sigma_1$ be the blow up of $\Sigma_1$ at a general point on $E_1$ and let $E_2$ be its exceptional curve. We repeat this procedure $(d-3)$ times until we get a weak del Pezzo surface $\Sigma_{d-3}$ of degree $3$.  Set $\rho=\rho_{d-3}\circ\cdots\circ \rho_1$ and denote the proper transforms of $A$ and $E_1,\ldots, E_{d-4}$ by the same symbols. Contract all the $(-2)$-curves on 
$\Sigma_{d-3}$ to get a del Pezzo surface $\Sigma$ of degree $3$ with du Val singularities. This contraction is denoted by 
$\psi:\Sigma_{d-3}\to\Sigma$. 

The image of $A$ by $\psi$ is a singular point of $\Sigma$. We apply Theorem~\ref{theorem:cubic-projection} to this singular point to get an effective anticanonical $\mathbb{Q}$-divisor $D$ on $\Sigma$ that defines a cylinder. This divisor contains all the lines passing through the point $\psi(A)$. Then
\[\psi^*(D)\sim_\mathbb{Q}-K_{\Sigma_{d-3}}.\]
We may write 
\[\psi^*(D)=aA+\sum^{d-3}_{i=1}b_iE_i+\Delta,\]
where $a$ and $b_i$'s are positive rational numbers and $\Delta$ is an effective $\mathbb{Q}$-divisor on $\Sigma_{d-3}$ whose support  contains none of $A$ and $E_i$'s. Note that $\psi(E_{d-3})$ is a line passing through the point~$\psi(A)$.

The divisor $aA+\rho (\Delta)$ on $S$ is $\mathbb{Q}$-linearly equivalent to $-K_{S}$. Furthermore,
\[S\setminus \mathrm{Supp}\left(aA+\rho (\Delta)\right)\cong 
\Sigma_{d-3} \setminus \mathrm{Supp}\left(aA+\sum^{d-3}_{i=1}b_iE_i+\Delta\right) \cong
S\setminus \mathrm{Supp}\left(D\right).\]
Therefore, $aA+\rho (\Delta)$ defines a $(-K_S)$-polar cylinder on $S$.
\end{proof}

\subsection{Construction for low degrees}\label{subsection:low degrees}
Now we seek for an effective anticanonical $\mathbb{Q}$-divisor~$D_S$ that defines a  cylinder  on a given singular del Pezzo surface $S$ of degree $\leq 2$ not listed in Theorem~\ref{theorem:main} (I).
To this end, instead of the singular surface $S$, we can consider its minimal resolution $f:\tilde{S}\to S$. Since we only allow du Val singularities on the surface $S$, the surface $\tilde{S}$ is a smooth weak del Pezzo surface, i.e., a smooth surface with nef and big anticanonical class $-K_{\tilde{S}}$.
On this smooth  weak del Pezzo surface, it is enough to find an effective anticanonical $\mathbb{Q}$-divisor $D_{\tilde{S}}$ satisfying the following conditions:
\begin{itemize}
\item its support contains all the $(-2)$-curves on $\tilde{S}$;
\item the complement of the support of $D_{\tilde{S}}$ is isomorphic to $\mathbb{A}^1\times Z$ for some smooth rational affine curve $Z$.
\end{itemize}
Then we can take the divisor $D_S$ as $f(D_{\tilde{S}})$.

On the other hand, in order to find such a divisor $D_{\tilde{S}}$, we start with the projective plane $\mathbb{P}^2$ and one of the following   effective anticanonical  $\mathbb{Q}$-divisors $D_{\mathbb{P}^2}$  on $\mathbb{P}^2$:
\begin{itemize}
\item a triple line $3L$;
\item  $a_1L_1+a_2L_2$, where $a_1+a_2=3$ and $L_1$, $L_2$ are distinct lines;
\item $aL+bC$, where $a+2b=3$, $C$ is an irreducible conic and $L$ is a line tangent to the conic $C$;
\item $a_1L_1+a_2L_2+a_3L_3$, where $a_1+a_2+a_3=3$ and $L_1$, $L_2$, $L_3$ are  three distinct lines meeting at a single point.
\end{itemize}
Note that the complement $\mathbb{P}^2\setminus \mathrm{Supp}(D_{\mathbb{P}^2})$ is isomorphic to $\mathbb{A}^2$,
$\mathbb{A}^1\times\left(\mathbb{A}^1\setminus \{\mbox{one point}\}\right)$, $\mathbb{A}^1\times\left(\mathbb{A}^1\setminus \{\mbox{one point}\}\right)$, $\mathbb{A}^1\times\left(\mathbb{A}^1\setminus \{\mbox{two points}\}\right)$, respectively.

Let $S$ be a given del Pezzo surface with du Val singularities and $\tilde{S}$ be its minimal resolution.
Starting from $\mathbb{P}^2$ with one of the divisors $D_{\mathbb{P}^2}$ we build  a sequence of blow ups $h:\check{S}\to \mathbb{P}^2$ and a sequence of blow downs  $g:\check{S}\to \tilde{S}$ with the following properties.
Let $D_{\check{S}}$ be the log pull-back of $D_{\mathbb{P}^2}$ by $h$, i.e., the divisor such that 
$$K_{\check{S}}+D_{\check{S}}=h^*(K_{\mathbb{P}^2}+D_{\mathbb{P}^2}).$$
The divisor $D_{\check{S}}$
satisfies the following:
\begin{enumerate}
\item it is effective;
\item its support contains all the exceptional curves of $h$;
\item its support contains all the curves contracted by $g$.
\end{enumerate}

$$
\xymatrix{ &(\check{S}, D_{\check{S}})\ar[ddl]_h\ar[dr]^g& \\
&&(\tilde{S}, D_{\tilde{S}})\ar[d]^f\\
(\mathbb{P}^2, D_{\mathbb{P}^2})&& (S, D_S)}
$$

Existence of such birational morphisms $h$ and $g$ shows that the given surface $S$ admits a $(-K_S)$-polar cylinder since
$$ \mathbb{P}^2\setminus \mathrm{Supp}(D_{\mathbb{P}^2})\cong \check{S}\setminus \mathrm{Supp}(D_{\check{S}})
\cong \tilde{S}\setminus \mathrm{Supp}(D_{\tilde{S}})\cong S\setminus \mathrm{Supp}(D_{S}),$$
where $D_{\tilde{S}}=g(D_{\check{S}})$ and $D_{S}=f(D_{\tilde{S}})$.

For a given del Pezzo surface of degree $\leq 2$ with du Val singularities not listed in Theorem~\ref{theorem:main}~(I), the method to construct such birational morphisms $h$ and $g$
is described in Tables~\ref{table:deg1} and~\ref{table:deg2} at the end.

\subsection{The Table}

 For a given del Pezzo surface $S$ of degree $\leq 2$ with du Val singularities,  in Tables~\ref{table:deg1}~and~\ref{table:deg2}, we provide the divisor $D_{\mathbb{P}^2}$ and the birational morphisms $h$ and $g$ described in \ref{subsection:low degrees} in order to
show how to construct a $(-K_S)$-polar cylinder on $S$.

 We read the tables in the following way.
 
 In the first column the singularity types are given in normal size letters. The singularity types in small letters 
 in Table~\ref{table:deg1}  are those for del Pezzo surfaces of degree $2$. These singularity types in small letters will be explained later.

 The birational morphism $h$ is obtained by successive blow ups with exceptional curves $E_{\one},\ldots,  E_{\thirteen}$ in this order. The configuration of these exceptional curves given in the third column shows how to take these blow ups. The exceptional curves  $E_{\one},\ldots, E_{\thirteen}$ are labelled by \ding{172}, ... ,  {\stiny \blowup{\textbf{13}}}, respectively, in the third column.
 The configuration in the third column also shows $D_{\mathbb{P}^2}$.
We denote the proper transforms of lines from $\mathbb{P}^2$ by $L_i$ (or $L$). We denote the proper transform of an irreducible conic  from $\mathbb{P}^2$ by $Q$.

In the second column, the sum of  the first divisor (tiger) and the second divisor (divisor contracted), if any, is the divisor $D_{\check{S}}$.
If we have the second divisor  in the second column,
the birational morphism $g$ is obtained by contracting curves drawn by dotted curves in the third column. 
The second divisor in the second column is contracted by $g$. Indeed, each component of the second divisor is depicted by a dotted curve in the third column.
If we do not have the second divisor  in the second column, then $\check{S}=\tilde{S}$ and the morphism $g$ is the identity.
The fat curves  in  the third column  are the curves to be $(-2)$-curves on $\tilde{S}$. The thin lines with dots at one of the ends are the curves to be $(-1)$-curves on $\tilde{S}$.  The  wiggly lines are  the curves to be non-negative curves on $\tilde{S}$.

In  the second column, the curves without superscripts  are $(-2)$-curves on $\check{S}$. The curves superscripted by black-circled numbers are the smooth rational curves  on $\check{S}$ with self-intersection numbers of the negatives of the black-circled numbers.
The curves superscripted by the circled numbers are the smooth rational curves  on $\check{S}$ with self-intersection numbers of the circled numbers.

For a del Pezzo surface of degree $2$ with a singularity type written in small letters in Table~\ref{table:deg1}
the divisor $D_{\mathbb{P}^2}$ and the birational morphisms $h$ and $g$ can be easily  obtained by contracting 
one of the $(-1)$-curves (thin lines with dots at one of the ends) in the third column. 
 Only for singularity types $\textnormal{D}_4$, $\textnormal{A}_3$ and $\textnormal{A}_2$ they  cannot be obtained in this way. For these three types, we provide the divisor $D_{\mathbb{P}^2}$ and the birational morphisms $h$ and $g$  in Table 2, separately.

\bigskip
The methods  are given according to the singularity types of  singular del Pezzo surfaces. Even though they  show how to construct the birational morphisms $h$ and $g$  for \emph{a seemingly single} del Pezzo surface $S$ of a given singularity type,  they indeed demonstrate how to obtain the birational morphisms $h$ and $g$ for \emph{every} del Pezzo surface $S$ of a given singularity type.
Let us explain the reason.

Let $S'$ be an arbitrary del Pezzo surface  of a given singularity type and $\tilde{S}'$ be its  minimal resolution. 
The configurations of $(-1)$-curves and $(-2)$-curves on smooth weak del Pezzo surfaces  are the same if the surfaces are of the same type.
If the divisor $D_{\tilde{S}}$ in the table for the given singularity type consists of only negative curves, then we can immediately find  a $\mathbb{Q}$-divisor $D_{\tilde{S}'}$ on the surface $S'$ with the same configuration of the same kind of curves and the same coefficients. This is $\mathbb{Q}$-linearly equivalent to  $-K_{\tilde{S}'}$.
It is obvious that we can recover the birational morphisms $h$ and $g$, in such a way that the divisor  $D_{\tilde{S}'}$ plays the same role as $D_{\tilde{S}}$,  by tracking back the blow downs and blow ups along the way given in the table for the given singularity type.

Now we consider the case when the divisor $D_{\tilde{S}}$ in the table for the given singularity type contains a non-negative curve.
If we find a $\mathbb{Q}$-divisor $D_{\tilde{S}'}$ on the surface $S'$ with the same configuration of the same kind  of curves  and the same coefficients, then the method presented in the table works for the surface $S'$, as in the previous case.
To find such a $\mathbb{Q}$-divisor $D_{\tilde{S}'}$, we first notice from the table that the divisor $D_{\tilde{S}}$ contains at most
one non-negative curve. Let $F$ be the non-negative curve on $\tilde{S}$ that appears in $D_{\tilde{S}}$ with coefficient $a>0$. We have to show that such a non-negative curve always exists
on the surface $\tilde{S}'$. To do so,  put $D_{\tilde{S}}^0=D_{\tilde{S}}-aF$. We can  then find  a $\mathbb{Q}$-divisor $D_{\tilde{S}'}^0$ on the surface $S'$ with the same configuration of the same kind of curves and the same coefficients as $D_{\tilde{S}}^0$.
Next we find a composition $\psi$ of  $9-d$ blow downs  starting from $\tilde{S}$  to $\mathbb{P}^2$. Let $C_1, \ldots, C_{9-d}$ be the negative curves contracted by the birational morphism $\psi$.  We suppose that the first $r$ curves $C_1, \cdots, C_r$ (possibly $r=0$) intersect $F$ and the others do not intersect $F$.
We are then able to obtain the composition $\psi'$ of  the $9-d$ blow downs  starting from $\tilde{S}'$  to $\mathbb{P}^2$ by contracting the negative curves $C_1', \ldots, C_{9-d}'$ corresponding to the curves $C_1, \ldots, C_{9-d}$, respectively,  since the configurations of the negative curves on $\tilde{S}$  and $\tilde{S}'$ are the same.
Then we see the divisor $\psi(D_{\tilde{S}})$ on $\mathbb{P}^2$.  The curve $F$  is not contracted by $\psi$.  Now we see that finding
a $\mathbb{Q}$-divisor $D_{\tilde{S}'}$ on $\tilde{S}'$ is equivalent to finding an irreducible curve $F'$ of degree $\deg(\psi(F))$ on
$\mathbb{P}^2$ such that
\begin{itemize}
\item $\psi'(D_{\tilde{S}'}^0)+aF'$ and $\psi(D_{\tilde{S}})$ have the same configuration;
\item $F'$ contains  the points $\psi'(C_1'), \ldots, \psi'(C_r')$ but not the points $\psi'(C_{r+1}'), \ldots, \psi'(C_{9-d}')$.
\end{itemize}
It is straightforward to find such an irreducible curve on $\mathbb{P}^2$.

We can immediately find the negative curves for the morphisms $\psi$ from the configurations in the third column for the singularity types with $\textnormal{E}_6$ on del Pezzo surfaces of degree $1$. For the singularity types with $\textnormal{A}_4$ on del Pezzo surfaces of degree $1$
we keep it  in mind that  there is always  one $(-1)$-curve that meets the two $(-2)$-curves that are the ends of the chain of four $(-2)$-curves on $\tilde{S}$ (see \cite[Appendix]{Pa03}).
For the singularity type $\textnormal{A}_2$  on a del Pezzo surface of degree $2$, we provide more detail in Example~\ref{example:dp2-A2}. This also helps us understanding how to use the tables.

\begin{example}\label{example:dp2-A2}
We explain how to construct a cylinder on a del Pezzo surface  of degree $2$ with singularity type $\textnormal{A}_2$.

On the projective plane $\mathbb{P}^2$, take $D_{\mathbb{P}^2}=\frac{7}{4}L_1+\frac{5}{4}L_2$, where $L_1$ and $L_2$ are distinct two lines. As shown in the third column for $\textnormal{A}_2$ ($d=2$), we take ten blow ups following the depicted instruction. Let $h:\check{S}\to\mathbb{P}^2$ be the composition of these ten blow ups. As explained at the beginning of the section, $E_{\one}$
(resp. $E_{\two}, \ldots, E_{\ten}$)  is  the proper transform of the exceptional divisor of the first (resp. second, ... , tenth) blow up on the surface $\check{S}$.
We then obtain
\[K_{\check{S}}+D_{\check{S}}
=h^*\left(K_{\mathbb{P}^2}+D_{\mathbb{P}^2}\right)\sim_{\mathbb{Q}} 0,\]
where
\[\begin{split}D_{\check{S}}=&\left(\frac{3}{4}E_{\one}+\frac{1}{4}E_{\four}+\frac{1}{4}E_{\five}+\frac{1}{4}E_{\six}+\frac{1}{4}E_{\seven}+\frac{1}{4}E_{\eight}+\frac{1}{4}E_{\nine}+\frac{1}{4}E_{\ten}+\frac{5}{4}L_2\right)+\\ &\left(\frac{6}{4}E_{\two}+\frac{5}{4}E_{\three}+\frac{7}{4}L_1\right).\\
\end{split}\]
Here,  the proper transforms of $L_1$ and $L_2$ by $h$ are denoted by  the same notation.
The $\mathbb{Q}$-divisor  $D_{\check{S}}$ is obtained by the sum of two $\mathbb{Q}$-divisors in the second column of the table.
On the surface $\check{S}$, the curve  $L_2$ is a $(-5)$-curve,
the curve  $E_{\one}$ is a  $(-3)$-curve, the curves  $E_{\two}$,   $E_{\three}$ are $(-2)$-curves and the other eight curves in the second column of the table are $(-1)$-curves.

Starting from the $(-1)$-curve $L_1$, we can contract $E_{\two}$ and $E_{\three}$ in turn  to the smooth weak del Pezzo surface $\tilde{S}$  corresponding to a del Pezzo surface $S$ of degree $2$ with singularity type $\textnormal{A}_2$.
Denote the composition of these three blow downs by $g:\check{S}\to \tilde{S}$. Put
$$D_{\tilde{S}}=g\left(\frac{3}{4}E_{\one}+\frac{1}{4}E_{\four}+\frac{1}{4}E_{\five}+\frac{1}{4}E_{\six}+\frac{1}{4}E_{\seven}+\frac{1}{4}E_{\eight}+\frac{1}{4}E_{\nine}+\frac{1}{4}E_{\ten}+\frac{5}{4}L_2\right).$$
This is an effective anticanonical $\mathbb{Q}$-divisor on the surface $\tilde{S}$.

Note that the curves $g(E_{\one})$  and $g(L_2)$ are the only $(-2)$-curves on the surface $\tilde{S}$ and they intersect each other in  the form of $\textnormal{A}_2$. Contracting these two $(-2)$-curves, we obtain a birational morphism $f:\tilde{S}\to S$, where $S$ is a del Pezzo surface of degree $2$ with one singular point of type $\textnormal{A}_2$.
 Put
$$D_{S}=f\circ g\left(\frac{1}{4}E_{\four}+\frac{1}{4}E_{\five}+\frac{1}{4}E_{\six}+\frac{1}{4}E_{\seven}+\frac{1}{4}E_{\eight}+\frac{1}{4}E_{\nine}+\frac{1}{4}E_{\ten}\right).$$
This is an effective anticanonical $\mathbb{Q}$-divisor on the surface $S$ such that  $$S\setminus \mathrm{Supp}(D_S)\cong
\mathbb{P}^2\setminus \mathrm{Supp}(D_{\mathbb{P}^2})\cong \mathbb{A}^1\times \left(\mathbb{A}^1\setminus \{\mbox{one point}\}\right).$$

Now we consider an arbitrary del Pezzo surface $S'$  of degree $2$ with one singular point of type $\textnormal{A}_2$. Let $f':\tilde{S}'\to S'$ be the minimal resolution of the surface $S'$. The surface $\tilde{S}'$ is a smooth weak del Pezzo surface of degree $2$. Since it has the same configuration of negative curves as that of the weak del Pezzo surface $\tilde{S}$, we have the negative curves $E_{\one}'$, $E_{\two}'$, $E_{\three}'$, $E_{\five}', \ldots, E_{\ten}'$, $L_2'$ on the surface $\tilde{S}'$ corresponding to
$g(E_{\one})$, $g(E_{\two})$, $g(E_{\three})$, $g(E_{\five}), \ldots, g(E_{\ten})$, $g(L_2)$, respectively, on the surface $\tilde{S}$.  In order to construct a $(-K_{S'})$-polar cylinder on the surface $S'$, it is enough to show that we can obtain the same kind irreducible curve $E_{\four}'$ on the surface $\tilde{S}'$
as the $0$-curve $g(E_{\four})$ on the surface $\tilde{S}$.

Let $\psi' :\tilde{S}'\to\mathbb{F}_2$ be the birational morphism obtained by contracting the  six  $(-1)$-curves 
 $E_{\five}'$, $E_{\six}'$, $E_{\seven}'$, $E_{\eight}'$, $E_{\nine}'$, $E_{\ten}'$ to the Hirzeburch surface with $(-2)$-curve section. Instead of $\mathbb{P}^2$ we maps $\tilde{S}'$ to $\mathbb{F}_2$ because this gives simpler explanation. However, its principle is the same. 
 The image $\psi'(E_{\one}')$ is the negative section of $\mathbb{F}_2$. The image $\psi'(L_2')$ is irreducible and not contained in a fiber of $\mathbb{F}_2\to\mathbb{P}^1$. The curve $\psi'(L_2')$ intersects the section $\psi'(E_{\one}')$ at a single point.

 We have the fiber of $\mathbb{F}_2\to\mathbb{P}^1$  passing though  the intersection point
 of  $\psi'(L_2')$ and $\psi'(E_{\one}')$.
The proper transform of this fiber by $\psi'$ will play the role of $E_{\four}'$. To be precise, denote the proper transform of the fiber by $E_{\four}'$.  Then we put
$$D_{\tilde{S}'}=\frac{3}{4}E_{\one}'+\frac{1}{4}E_{\four}'+\frac{1}{4}E_{\five}'+\frac{1}{4}E_{\six}'+\frac{1}{4}E_{\seven}'+\frac{1}{4}E_{\eight}'+\frac{1}{4}E_{\nine}'+\frac{1}{4}E_{\ten}'+\frac{5}{4}L_2'.$$
This is an effective anticanonical $\mathbb{Q}$-divisor on the surface $\tilde{S}'$.  We put
$$D_{S'}=f'\left(\frac{1}{4}E_{\four}'+\frac{1}{4}E_{\five}'+\frac{1}{4}E_{\six}'+\frac{1}{4}E_{\seven}'+\frac{1}{4}E_{\eight}'+\frac{1}{4}E_{\nine}'+\frac{1}{4}E_{\ten}'\right).$$
This is an effective anticanonical $\mathbb{Q}$-divisor on the surface $S'$ and we have
$$S'\setminus \mathrm{Supp}(D_{S'})\cong
\tilde{S}'\setminus \mathrm{Supp}(D_{\tilde{S}'})\cong \mathbb{A}^1\times \left(\mathbb{A}^1\setminus \{\mbox{one point}\}\right).$$
Therefore, $S'$  has a $(-K_{S'})$-polar cylinder.
\end{example}

\begin{remark}
In fact, we have some freedom for the coefficients in the divisors $D_{\check{S}}$. We have fixed their coefficients simply to have better exposition in the table. For instance,  let us reconsider Example~\ref{example:dp2-A2}. We here consider
\[D_{\mathbb{P}^2}=\left(2-\epsilon\right)L_1+\left(1+\epsilon\right)L_2\]
instead of $\frac{7}{4}L_1+\frac{5}{4}L_2$. The proper transform of the divisor $D_{\mathbb{P}^2}$ by the birational morphism $h$ is
\[\begin{split}D_{\check{S}}=&\left(\left(1-\epsilon\right)E_{\one}+\left(1-3\epsilon\right)E_{\four}+\epsilon E_{\five}+\epsilon E_{\six}+\epsilon E_{\seven}+
\epsilon E_{\eight}+\epsilon E_{\nine}+\epsilon E_{\ten}+\left(1+\epsilon\right)L_2\right)+\\ &\left(\left(2-2\epsilon\right)E_{\two}+\left(2-3\epsilon\right)E_{\three}+
\left(2-\epsilon\right)L_1\right).\\
\end{split}\]
For the divisor $D_{\check{S}}$ to be effective and to contain the exceptional divisors of the birational morphisms $h$ and $g$, it is enough to take a rational number $\epsilon$ such that   $0< \epsilon <\frac{1}{3}$. In Example~\ref{example:dp2-A2}, we have simply chosen $\epsilon=\frac{1}{4}$. In almost all the other singularity types of the table, we may manipulate   the coefficients in the divisors $D_{\check{S}}$ in the same way.
\end{remark}
\bigskip


\begin{center}


\end{center}

\bigskip

\textbf{Acknowledgements.} The authors would like to express their sincere appreciation to the referee for
the invaluable comments. The referee's comments enable the authors to improve their results as well as their
exposition. 
The first author was supported within the framework of a subsidy granted to the HSE
by the Government of the Russian Federation for the implementation of the Global Competitiveness Program.
The second author has been supported by IBS-R003-D1, Institute for Basic Science in Korea and the third author has been supported by NRF-2014R1A1A2056432, the National Research Foundation in Korea.



\begin{thebibliography}{99}



\bibitem{AN06}
V.\,Alexeev, V.\,Nikulin, \emph{Del Pezzo and $K3$ surfaces}, MSJ Memoirs \textbf{15}, Mathematical Society of Japan, 2006.



\bibitem{BW79}
J.~W.~Bruce, C.~T.~C Wall, \emph{On the classification of cubic surfaces}, J. London Math. Soc. (2) \textbf{19} (1979) 245--256.

\bibitem{Ch07a}
I.~Cheltsov, \emph{Log canonical thresholds of del Pezzo
surfaces}, Geom. Funct. Anal. \textbf{11} (2008), 1118--1144.

\bibitem{Ch13}
I.~Cheltsov, \emph{Del Pezzo surfaces and local inequalities}, Automorphisms in Birational and Affine Geometry (Levico Terme, Italy, 2012), 
Springer Proc. Math. Stat.,
\textbf{79}, Springer, 2014,  pp.~83--101.



\bibitem{ChK10}
I.~Cheltsov, D.~Kosta \emph{Computing $\alpha$-invariants of
singular del Pezzo surfaces}, J. Geom.  Anal. \textbf{24} (2014), no. 2, 798--842.


\bibitem{CheltsovParkWon}
I.~Cheltsov, J.~Park, J.~Won, \emph{Affine cones over smooth cubic
surfaces}, to appear in J. Eur. Math. Soc. arXiv:1303.2648.


\bibitem{Co88} D. F. Coray,  M. A. Tsfasman, \emph{ Arithmetic on singular del Pezzo
surfaces}, Proc. London Math. Soc. \textbf{57} (1988), no. 1,
25--87.


\bibitem{Dema80}M.~Demazure,
\emph{Surfaces de del {P}ezzo}, S\'eminaire sur les singularit\'es
des surfaces, Lecture Notes in Math. \textbf{777}, Springer-Verlag,
1980,  pp.~23--69.

\bibitem{De13} U.~Derenthal,
\emph{Singular del Pezzo surfaces whose universal torsors are hypersurfaces}, 
 Proc. Lond. Math. Soc. (3) \textbf{108} (2014), no. 3, 638--681.

\bibitem{FZ2003}
H.~Flenner, M.~Zaidenberg, \emph{Rational curves and rational
singularities}, Math. Z. \textbf{244} (2003), 549--575.




\bibitem{HiWa81}
F.~Hidaka, K.~Watanabe, \emph{Normal Gorenstein surfaces with
ample anti-canonical divisor}, Tokyo J. Math. \textbf{4} (1981),
no. 2, 319--330.

\bibitem{KeMc99}
S.~Keel,  J.~McKernan, \emph{Rational curves on quasi-projective surfaces}, Mem. Amer. Math. Soc. \textbf{140} (1999), no. 669, viii+153 pp.
\bibitem{KPZ11a}
T.~Kishimoto, Yu.~Prokhorov, M.~Zaidenberg, \emph{Group actions on
affine cones}, Affine algebraic geometry,  CRM Proc.
Lecture Notes \textbf{54}, Amer. Math. Soc.,
2011, pp. 123--163.


\bibitem{KPZ11b}
T.\,Kishimoto, Yu.\,Prokhorov, M.\,Zaidenberg, \emph{Affine cones
over Fano threefolds and additive group actions},   Osaka J. Math. \textbf{51} (2014), no. 4, 1093--1112.






\bibitem{KPZ12a}
T.~Kishimoto, Yu.~Prokhorov, M.~Zaidenberg,
\emph{$\mathbb{G}_a$-actions on affine cones}, Transform. Groups \textbf{18} (2013), 1137--1153.



\bibitem{KPZ12b}
T.~Kishimoto, Yu.~Prokhorov, M.~Zaidenberg, \emph{Unipotent group
actions on del Pezzo cones}, Algebraic Geometry \textbf{1} (2014),  46--56.



\bibitem{Ko97}
J.~Koll\'ar, \emph{Singularity of pairs}, Algebraic geometry--Santa Cruz 1995, Part 1, Proc. Sympos. Pure Math. \textbf{62},
Amer. Math. Soc., 1997,  pp. 221--287.

\bibitem{KoMo}
J.~Koll\'ar, S.~Mori, \emph{Birational geometry of algebraic
varieties},~ Cambridge Tracts in Mathematics \textbf{134},
Cambridge University Press, 1998.

\bibitem{La04II}
R.~Lazarsfeld, \emph{Positivity in algebraic geometry II}
Positivity for vector bundles, and multiplier ideals. Ergeb. Math. Grenzgeb. (3)   \textbf{49},
Springer-Verlag, 2004.


\bibitem{Pa01}
J.~Park, \emph{Birational maps of del Pezzo fibrations}, J. Reine
Angew. Math. \textbf{538} (2001), 213--221.



\bibitem{Pa03}
J.~Park, \emph{A note on del Pezzo fibrations of degree $1$}, Commun. Alg. \textbf{31}  (2003), 5755--5768.


\bibitem{PaW10}
J.~Park, J.~Won, \emph{Log-canonical thresholds on del Pezzo
surfaces of degrees $\geqslant 2$},  Nagoya Math. J. \textbf{200}
(2010), 1--26.

\bibitem{PaW11}
J.~Park, J.~Won, \emph{Log canonical thresholds on Gorenstein canonical del Pezzo surfaces}, 
Proc. Edinb. Math. Soc. (2) \textbf{54}(2011), no. 1, 187--219.




\bibitem{Pi77}H.~C. Pinkham, \emph{Simple elliptic singularities}, Several Complex Variables, Part 1,
 Proc. Sympos.  Pure
Math. \textbf{30}, Amer. Math. Soc., 1977, pp.69--70.





\bibitem{Pr98plt}
Yu.~Prokhorov, \emph{Blow-ups of canonical singularities},  Algebra
(Moscow, 1998),  de Gruyter, Berlin, 2000, pp. 301--317.



\bibitem{U83}T.~Urabe,
\emph{On singularities on degenerate del {P}ezzo surfaces of
degree  $1,$ $2$}, Singularities, Part 2 (Arcata, Calif., 1981),  Proc. Sympos. Pure Math.
 \textbf{40}, Amer. Math. Soc., 1983, pp.~587--591.
\end{thebibliography}
\end{document}